\numberwithin{equation}{section}
\theoremstyle{plain}
\newtheorem{theorem}{Theorem}[section]
\newtheorem{lemma}[theorem]{Lemma}
\newtheorem{corollary}[theorem]{Corollary}
\newtheorem{proposition}[theorem]{Proposition}
\theoremstyle{definition}
\newtheorem{example}[theorem]{Example}
\newtheorem{remark}[theorem]{Remark}
\newtheorem{?}[theorem]{Problem}
\def\boxit#1{\leavevmode\hbox{\vrule\vtop{\vbox{\kern.33333pt\hrule
    \kern1pt\hbox{\kern1pt\vbox{#1}\kern1pt}}\kern1pt\hrule}\vrule}}
\newcommand{\f}[1]{\ifthenelse{\equal{#1}{1}}{(q;q)_\infty}{(q^{#1};q^{#1})_{\infty}}}
\begin{document}

\title[Weighted 7-colored partitions]{On certain weighted 7-colored partitions}

\author[S. Chern]{Shane Chern}
\address[Shane Chern]{Department of Mathematics, The Pennsylvania State University, University Park, PA 16802, USA}
\email{shanechern@psu.edu; chenxiaohang92@gmail.com}

\author[D. Tang]{Dazhao Tang}

\address[Dazhao Tang]{College of Mathematics and Statistics, Chongqing University, Huxi Campus LD206, Chongqing 401331, P.R. China}
\email{dazhaotang@sina.com}

\date{\today}

\begin{abstract}
Inspired by Andrews' 2-colored generalized Frobenius partitions, we consider certain weighted 7-colored partition functions and establish some interesting Ramanujan-type identities and congruences. Moreover, we provide combinatorial interpretations of some congruences modulo 5 and 7. Finally, we study the properties of weighted 7-colored partitions weighted by the parity of certain partition statistics.
\end{abstract}

\subjclass[2010]{05A17, 11P83, 05A30}

\keywords{Weighted 7-colored partition; Ramanujan-type congruence; Unified multirank; Vector crank}

\maketitle



\section{Introduction}\label{sec1}
In his 1984 Memoir of the American Mathematical Society, Andrews \cite{And2} introduced the \emph{generalized Frobenius partition} or simply the \emph{F-partition} of $n$, which is a two-rowed array of nonnegative integers
\begin{align}\label{F-partition}
\begin{pmatrix}a_{1} &a_{2} &\cdots &a_{r}\\ b_{1} &b_{2} &\cdots &b_{r} \end{pmatrix},
\end{align}
wherein each row, which is of the same length, is arranged in non-increasing order with $n=r+\sum_{i=1}^{r}a_{i}+\sum_{i=1}^{r}b_{i}$. Furthermore, Andrews studied many general classes of F-partitions. One of them is F-partitions whose parts are taken from $k$ copies of the nonnegative integers, which is called \textit{$k$-colored F-partitions}. Let $c\phi_{k}(n)$ denote the number of $k$-colored F-partitions of $n$. Andrews derived the following generating function for $c\phi_{2}(n)$.
\begin{theorem}[Eq. (5.17), \cite{And2}]
We have
\begin{align}\label{gene fun:2-colored}
\sum_{n=0}^{\infty}c\phi_{2}(n)q^{n}=\frac{(q^{2};q^{4})_{\infty}}{(q;q^{2})_{\infty}^{4}(q^{4};q^{4})_{\infty}}.
\end{align}
\end{theorem}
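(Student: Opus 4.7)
The plan is to follow the classical $q$-series route. \emph{Step 1: generating function as a constant term.} Each row of \eqref{F-partition} is a partition into distinct parts drawn from two copies of the nonnegative integers; introducing a catalyst $z$ that tracks the row length, the one-row generating function is $(-z;q)_\infty^2$. Because the two rows must share a common length $r$, and the identity $n=r+\sum a_i+\sum b_i$ carries an extra weight $q^r$, a standard Cauchy pairing of the two row series as coefficients in $z$ yields
$$\sum_{n\geq 0} c\phi_2(n)\,q^n \;=\; [z^0]\,(-z;q)_\infty^2\,(-q/z;q)_\infty^2.$$

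\emph{Step 2: reduction to a theta quotient.} Applying the Jacobi triple product identity
$$(-z;q)_\infty\,(-q/z;q)_\infty\,(q;q)_\infty \;=\; \sum_{m\in\mathbb{Z}} z^m\,q^{m(m-1)/2}$$
to each factor pair and extracting $[z^0]$ from the square forces $m_1+m_2=0$, so the exponents collapse via $m(m-1)/2+(-m)(-m-1)/2 = m^2$. This produces $\sum_n c\phi_2(n)q^n = (q;q)_\infty^{-2}\,\sum_{m\in\mathbb{Z}} q^{m^2}$. A further application of the triple product (with $q\mapsto q^2$ and $z\mapsto q$) converts the theta series back to a product, $\sum_{m\in\mathbb{Z}} q^{m^2} = (-q;q^2)_\infty^2(q^2;q^2)_\infty$.

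\emph{Step 3: Pochhammer cleanup.} Using the elementary identities $(q;q^2)_\infty(-q;q^2)_\infty = (q^2;q^4)_\infty$, $(q;q)_\infty = (q;q^2)_\infty(q^2;q^2)_\infty$, and $(q^2;q^2)_\infty = (q^2;q^4)_\infty(q^4;q^4)_\infty$, the expression should collapse to
$$\frac{(q^2;q^4)_\infty}{(q;q^2)_\infty^{4}\,(q^4;q^4)_\infty},$$
which is exactly \eqref{gene fun:2-colored}. The main obstacle I expect to wrestle with is Step~1: pinning down Andrews' ordering convention for parts of equal magnitude but different colors and translating it cleanly into the constant-term identity, so that no spurious factors of $(1+z)$ or offsets in $z$ slip in; once the double product $(-z;q)_\infty^2(-q/z;q)_\infty^2$ is in place, Steps~2--3 are routine triple-product and Pochhammer manipulations.
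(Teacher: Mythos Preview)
The paper does not prove this theorem at all: it is quoted verbatim from Andrews' Memoir \cite{And2} as background material in the introduction, with no proof supplied. So there is nothing in the paper to compare your argument against.

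That said, your proposal is correct and is essentially Andrews' own derivation. Step~1 is right: with two colours of each nonnegative integer and parts distinct in the coloured sense, one row has generating function $(-z;q)_\infty^2$; absorbing the extra weight $q^r$ into the bottom row shifts it to $(-q/z;q)_\infty^2$, and $[z^0]$ enforces equal row lengths. Step~2 checks: squaring the triple product and picking off $[z^0]$ forces $m_1=-m_2$, and $m(m-1)/2+m(m+1)/2=m^2$, giving $\varphi(q)/(q;q)_\infty^2$. Step~3 is routine; carrying it through one gets
\[
\frac{(-q;q^2)_\infty^2(q^2;q^2)_\infty}{(q;q)_\infty^2}
=\frac{(-q;q^2)_\infty^2}{(q;q^2)_\infty^2(q^2;q^2)_\infty}
=\frac{(q^2;q^4)_\infty^2}{(q;q^2)_\infty^4(q^2;q^4)_\infty(q^4;q^4)_\infty}
=\frac{(q^2;q^4)_\infty}{(q;q^2)_\infty^4(q^4;q^4)_\infty},
\]
as desired. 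The ``obstacle'' you flag in Step~1 is a non-issue here: for $c\phi_2$ the two coloured copies of each integer are simply treated as distinct letters, so $(-z;q)_\infty^2$ is exactly the right one-row series and no spurious $(1+z)$ factor appears.
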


Here and in the sequel, we adopt the following customary notations on partitions and $q$-series:
\begin{align*}
(a;q)_{\infty} &:=\prod_{n=0}^{\infty}(1-aq^{n}),\\
(a_{1},a_{2},\cdots,a_{n};q)_{\infty}& :=(a_{1};q)_{\infty}(a_{2};q)_{\infty}\cdots(a_{n};q)_{\infty}, \quad |q|<1.
\end{align*}

In addition, Andrews obtained the following congruence modulo 5 for $c\phi_{2}(n)$.
\begin{theorem}[Corollary 10.1, \cite{And2}]
For all $n\geq0$,
\begin{align}
c\phi_{2}(5n+3)\equiv0\pmod{5}.\label{F-partition cong}
\end{align}
\end{theorem}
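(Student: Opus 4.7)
The plan is to reduce the generating function \eqref{gene fun:2-colored} modulo $5$ and then extract the coefficient of $q^{5n+3}$ by means of Euler's pentagonal number theorem together with Jacobi's identity. I would first simplify \eqref{gene fun:2-colored} via the elementary identities $(q;q^{2})_{\infty}=(q;q)_{\infty}/(q^{2};q^{2})_{\infty}$ and $(q^{2};q^{4})_{\infty}=(q^{2};q^{2})_{\infty}/(q^{4};q^{4})_{\infty}$ to obtain
\[
\sum_{n\geq 0}c\phi_{2}(n)q^{n}=\frac{(q^{2};q^{2})_{\infty}^{5}}{(q;q)_{\infty}^{4}(q^{4};q^{4})_{\infty}^{2}}.
\]
Multiplying numerator and denominator by $(q;q)_{\infty}\,(q^{4};q^{4})_{\infty}^{3}$ and invoking the Frobenius-type congruences $(q^{k};q^{k})_{\infty}^{5}\equiv (q^{5k};q^{5k})_{\infty}\pmod 5$, I would pass to
\[
\sum_{n\geq 0}c\phi_{2}(n)q^{n}\equiv\frac{(q;q)_{\infty}\,(q^{4};q^{4})_{\infty}^{3}\,(q^{10};q^{10})_{\infty}}{(q^{5};q^{5})_{\infty}\,(q^{20};q^{20})_{\infty}}\pmod{5}.
\]

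Next I would expand the two nontrivial factors in the numerator by Euler's pentagonal number theorem
\[
(q;q)_{\infty}=\sum_{k\in\mathbb{Z}}(-1)^{k}q^{k(3k-1)/2}
\]
and Jacobi's identity
\[
(q^{4};q^{4})_{\infty}^{3}=\sum_{n\geq 0}(-1)^{n}(2n+1)\,q^{2n(n+1)},
\]
and tabulate the residues modulo $5$ of the two exponent families. A short check shows that $k(3k-1)/2 \pmod 5\in\{0,1,2\}$ and $2n(n+1)\pmod 5\in\{0,2,4\}$. The remaining quotient $(q^{10};q^{10})_{\infty}/[(q^{5};q^{5})_{\infty}(q^{20};q^{20})_{\infty}]$ is a power series in $q^{5}$, so it contributes only exponents $\equiv 0 \pmod 5$ and cannot shift the residue class.

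The decisive observation is then that the residue $3\pmod 5$ admits exactly one decomposition $a+b$ with $a\in\{0,1,2\}$ and $b\in\{0,2,4\}$, namely $a=1$, $b=2$. The requirement $k(3k-1)/2\equiv 1\pmod 5$ forces $k\equiv 1\pmod 5$, while $2n(n+1)\equiv 2\pmod 5$ forces $n\equiv 2\pmod 5$; but when $n\equiv 2\pmod 5$ the Jacobi weight $2n+1$ is itself a multiple of $5$. Consequently every monomial of the form $q^{5n+3}$ in the mod-$5$ expansion above carries a coefficient divisible by $5$, which is precisely \eqref{F-partition cong}. The only genuinely delicate point I anticipate is the residue bookkeeping for the pentagonal and Jacobi exponents; once those short tables are assembled the conclusion is immediate, and no deeper input (such as modular forms or Ramanujan's identities for $(q;q)_{\infty}$) appears to be needed.
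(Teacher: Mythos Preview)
Your argument is correct and coincides with the paper's own method: the paper does not prove this cited result in isolation, but it establishes the more general congruence \eqref{colored part mod 5(3)} of Theorem~\ref{Thm:mod 5} (whose case $t=4$ is exactly the statement, since $w_4=c\phi_2$) by precisely the same route---reducing $f_2^5/(f_1^4 f_t^2)$ modulo~$5$ to $f_1 f_t^3\cdot f_{10}/(f_5 f_{5t})$, expanding $f_1$ via Euler's pentagonal number theorem and $f_t^3$ via Jacobi's identity, and then doing the residue bookkeeping on the exponents. The only cosmetic difference is that the paper first discards the $J_2^*$-piece of $f_4^3$ (noting its coefficients are $\equiv 0\pmod 5$) and then observes no residue-$3$ term survives, whereas you locate the unique residue-$3$ combination first and then kill it with the same observation $2n+1\equiv 0$.
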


Moreover, Andrews defined the \emph{D-rank} of an F-partition \eqref{F-partition} to be $a_{1}-b_{1}$ (if the parts are colored, the D-rank is the numerical magnitude of $a_{1}-b_{1}$). He then conjectured \cite[Conjecture 11.1]{And2} that the D-rank might explain the congruence \eqref{F-partition cong} combinatorially for 2-colored F-partitions. Unfortunately, this was asserted untrue by Lovejoy \cite{Lov}.

According to the generating function \eqref{gene fun:2-colored} for $c\phi_{2}(n)$ and the second author's recent work with Shishuo Fu \cite{FT} involving classical theta functions, in this paper, we study the following weighted 7-colored partitions $w_{t}(n)$ arithmetically as well as combinatorially, given by
\begin{align}
\sum_{n=0}^{\infty}w_{t}(n)q^{n}:=\frac{(q^{2};q^{2})_{\infty}}{(q;q^{2})_{\infty}^{4}(q^{t};q^{t})_{\infty}^{2}}.\label{7 colored gf}
\end{align}
In some sense, this partition function can be viewed as a generalization of Andrews' $c\phi_{2}(n)$, as one may notice that $w_{4}(n)=c\phi_{2}(n)$ for all $n\geq0$.

Similar to $c\phi_{2}(n)$, there are some Ramanujan-type congruences for $w_{t}(n)$. Below are several examples that we will prove in the later sections. When $t\equiv0\pmod{3}$, we have
\begin{align*}
w_t(3n+1)\equiv 0\pmod{4},\\
w_t(3n+2)\equiv 0\pmod{9}.
\end{align*}
Furthermore, we have
\begin{align*}
&w_t(5n+3)\equiv w_t(5n+4)\equiv 0\pmod{5},\quad \textrm{if } t\equiv 0\pmod{5},\\
&w_t(5n+4)\equiv 0\pmod{5},\quad \textrm{if } t\equiv 1\pmod{5},\\
&w_t(5n+3)\equiv 0\pmod{5},\quad \textrm{if } t\equiv 4\pmod{5}.
\end{align*}
In addition, we get
\begin{align*}
w_{2}(7n+4) &\equiv0\pmod{7},\\
w_{2}(11n+10)&\equiv 0 \pmod{11},\\
w_{3}(24n+23) &\equiv0\pmod{27}.
\end{align*}

Of course, there are more congruences beyond this list. However, these congruences can be proved by the standard $q$-series techniques and hence we do not require complicated tools like modular forms.

The rest of this paper is organized as follows. In Sect.~\ref{sec:congrunces}, we establish some Ramanujan-type identities and congruences for $w_{t}(n)$. We will then consider a unified multirank and a vector crank which can give combinatorial interpretations of certain congruences modulo 5 and 7 for $w_{t}(n)$ in Sect.~\ref{sec:com inter}. In Sect.~\ref{weighted part sta}, we study the properties of weighted 7-colored partitions weighted by the parity of certain partition statistics. Some Ramanujan-type congruences and an analog of Euler's recurrence relation are obtained. In the last section, we conclude with some remarks and questions for further study.

\section{Ramanujan-type identities and congruences}\label{sec:congrunces}
In this section, we shall prove some Ramanujan-type identities and congruences for weighted 7-colored partitions $w_{t}(n)$.

For notational convenience, we denote $\f{k}$ by $f_{k}$ for positive integers $k$ when manipulating $q$-series. Recall that Ramanujan's classical theta functions $\varphi(q)$ and $\psi(q)$ are given by
\begin{align}
\varphi(q)&:=\sum_{n=-\infty}^\infty q^{n^2}=\frac{f_{2}^5}{f_{1}^2 f_{4}^2},\\
\psi(q)&:=\sum_{n= 0}^\infty q^{n(n+1)/2}=\frac{f_{2}^2}{f_{1}}.\label{psi function}
\end{align}
It is also known that
\begin{align}
\varphi(-q)=\frac{f_{1}^2}{f_{2}}.
\end{align}

Before stating our results, we require the following $2$-dissections.
\begin{align}
f_1^2&=\frac{f_2f_8^5}{f_4^2f_{16}^2}-2q\frac{f_2f_{16}^2}{f_8},\label{eq:f12}\\
f_1^4&=\frac{f_4^{10}}{f_2^2f_8^4}-4q\frac{f_2^2f_8^4}{f_4^2},\label{eq:f14}\\
\frac{1}{f_1^4}&=\frac{f_4^{14}}{f_2^{14}f_8^4}+4q\frac{f_4^2f_8^4}{f_2^{10}}\label{eq:f1-4}.
\end{align}
These follow respectively from the $2$-dissections of $\varphi(-q)$, $\varphi^2(-q)$, and $\varphi^2(q)$ (cf. \cite[p. 40, Entry 25]{Ber1991}).

We also need the following $3$-dissections of $\psi(q)$ and $1/\varphi(-q)$,
\begin{align}
\psi(q)&=\psi(q^9)\left(\frac{1}{x(q^3)}+q\right),\label{eq:psi-dis-3}\\
\frac{1}{\varphi(-q)}&=\frac{\varphi(-q^9)^3}{\varphi(-q^3)^4}\left(1+2qx(q^3)+4q^2x(q^3)^2\right),\label{eq:1/phi-dis-3}
\end{align}
where
\begin{equation}
x(q)=\frac{(q;q^{2})_{\infty}}{(q^{3};q^{6})_{\infty}^{3}}.\label{Ram cubic frac}
\end{equation}
Here \eqref{eq:psi-dis-3} comes directly from the Jacobi's triple product identity. For \eqref{eq:1/phi-dis-3}, see \cite{BO2011}.


\begin{theorem}\label{Ram type fun1}
For $t\equiv0\pmod{2}$,
\begin{align}
 \sum_{n=0}^{\infty}w_{t}(2n)q^{n} &=\frac{f_{2}^{14}}{f_{1}^{9}f_{4}^{4}f_{t/2}^{2}},
 \label{1-dissection iden1}\\
 \sum_{n=0}^{\infty}w_{t}(2n+1)q^{n} &=\frac{4f_{2}^{2}f_{4}^{4}}{f_{1}^{5}f_{t/2}^{2}}.\label{1-dissection iden2}
\end{align}
\end{theorem}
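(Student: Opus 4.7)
The plan is to start from the defining generating function \eqref{7 colored gf}, rewrite it in the $f_k$ notation, and then apply the 2-dissection formula \eqref{eq:f1-4} for $1/f_1^4$. Since $(q;q^2)_\infty = f_1/f_2$, the generating function becomes
\[
\sum_{n=0}^{\infty}w_{t}(n)q^{n}=\frac{f_{2}^{5}}{f_{1}^{4}\,f_{t}^{2}}.
\]
The hypothesis $t\equiv 0\pmod 2$ is crucial because it guarantees that $f_t^2$, being a series in $q^t$ and hence in $q^2$, contains only even powers of $q$. Therefore the only $q$-factor that mixes the residues modulo $2$ is $1/f_1^4$.

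Next I would substitute \eqref{eq:f1-4} into the above to get
\[
\sum_{n=0}^{\infty}w_{t}(n)q^{n}
=\frac{f_{2}^{5}}{f_{t}^{2}}\Bigl(\frac{f_{4}^{14}}{f_{2}^{14}f_{8}^{4}}+4q\frac{f_{4}^{2}f_{8}^{4}}{f_{2}^{10}}\Bigr)
=\frac{f_{4}^{14}}{f_{2}^{9}f_{8}^{4}f_{t}^{2}}+4q\frac{f_{4}^{2}f_{8}^{4}}{f_{2}^{5}f_{t}^{2}}.
\]
Since every $f_k$ appearing here carries an even subscript (using $t$ even), the first summand involves only even powers of $q$ and the second involves only odd powers. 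Extracting the even-indexed coefficients and then replacing $q^2$ by $q$ (equivalently $f_{2k}\mapsto f_k$) in the first piece yields
\[
\sum_{n=0}^{\infty}w_{t}(2n)q^{n}=\frac{f_{2}^{14}}{f_{1}^{9}f_{4}^{4}f_{t/2}^{2}},
\]
which is \eqref{1-dissection iden1}. Similarly, dividing the second piece by $q$, extracting coefficients, and applying the same substitution gives
\[
\sum_{n=0}^{\infty}w_{t}(2n+1)q^{n}=\frac{4f_{2}^{2}f_{4}^{4}}{f_{1}^{5}f_{t/2}^{2}},
\]
which is \eqref{1-dissection iden2}.

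The argument is essentially a direct application of the already recorded 2-dissection \eqref{eq:f1-4}; no genuine obstacle arises. The only point that requires a moment of care is the observation that the parity hypothesis on $t$ allows $f_t^2$ to be pulled out as an ``even'' factor, so that the entire 2-dissection burden is carried by $1/f_1^4$. Once this is noticed, both identities follow from matching even and odd parts and rescaling $q\mapsto q^{1/2}$.
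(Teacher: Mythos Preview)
Your proof is correct and follows exactly the same route as the paper: rewrite the generating function as $f_2^5/(f_1^4 f_t^2)$, apply the 2-dissection \eqref{eq:f1-4} of $1/f_1^4$, and then separate even and odd powers (using that $t$ is even) before replacing $q^2$ by $q$. The paper's version is just a terser presentation of the same computation.
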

\begin{proof}

From \eqref{eq:f1-4} and \eqref{7 colored gf}, we have
\begin{align}
\sum_{n=0}^{\infty}w_{t}(n)q^{n} &=\frac{f_{2}^5}{f_{1}^4f_{t}^2}\label{2-dissection}=\frac{f_{4}^{14}}{f_{2}^{9}f_{8}^{4}f_{t}^{2}}+4q\frac{f_{4}^{2}f_{8}^{4}}{f_{2}^{5}f_{t}^{2}}.
\end{align}
Extracting terms involving $q^{2n}$ and $q^{2n+1}$ in \eqref{2-dissection} and replacing $q^2$ by $q$, one easily obtains \eqref{1-dissection iden1}--\eqref{1-dissection iden2}. This completes the proof.
\end{proof}

This immediately yields
\begin{corollary}
For $t\equiv0\pmod{2}$,
\begin{align*}
w_{t}(2n+1)\equiv0\pmod{4}.
\end{align*}
\end{corollary}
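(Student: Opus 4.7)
The plan is to read off the corollary directly from equation \eqref{1-dissection iden2} of Theorem \ref{Ram type fun1}, which provides the generating function
\[
\sum_{n=0}^{\infty} w_t(2n+1)\, q^n \;=\; \frac{4\,f_2^2 f_4^4}{f_1^5\, f_{t/2}^2}.
\]
The hypothesis $t\equiv 0\pmod 2$ ensures that $t/2$ is a positive integer, so the right-hand side is a bona fide element of $\mathbb{Z}((q))$; in fact, as will be noted below, it lies in $\mathbb{Z}[[q]]$.

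The only thing to justify is that the factor $f_2^2 f_4^4 / (f_1^5 f_{t/2}^2)$ has integer power-series coefficients, after which the explicit $4$ in front yields the divisibility automatically. For any positive integer $k$, the product $f_k = (q^k;q^k)_\infty$ lies in $\mathbb{Z}[[q]]$ because its defining infinite product expands term by term with integer coefficients (equivalently, by Euler's pentagonal number theorem). Its reciprocal $1/f_k$ is the generating function for partitions into parts that are multiples of $k$, so it too lies in $\mathbb{Z}[[q]]$. Since $\mathbb{Z}[[q]]$ is a ring, any monomial $\prod_i f_{k_i}^{\epsilon_i}$ with $\epsilon_i\in\mathbb{Z}$ has integer coefficients.

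Applying this to $f_2^2 f_4^4/(f_1^5 f_{t/2}^2)$, the coefficient of $q^n$ on the right-hand side of \eqref{1-dissection iden2} is $4$ times an integer. Comparing coefficients of $q^n$ on both sides gives $w_t(2n+1) \equiv 0 \pmod 4$ for all $n\geq 0$, as claimed. There is essentially no obstacle here: the substantive computation was the $2$-dissection of $1/f_1^4$ used to prove Theorem \ref{Ram type fun1}, and the corollary is just the observation that the odd-part dissection already carries an explicit factor of $4$.
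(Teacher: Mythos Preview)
Your argument is correct and is exactly the approach taken in the paper: the corollary is stated as an immediate consequence of Theorem~\ref{Ram type fun1}, reading off the explicit factor of $4$ in \eqref{1-dissection iden2}. You have merely spelled out the (routine) reason why $f_2^2 f_4^4/(f_1^5 f_{t/2}^2)$ has integer coefficients, which the paper leaves implicit.
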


\begin{remark}
We notice that the congruence $w_{4}(2n+1)=c\phi_{2}(2n+1)\equiv0\pmod{4}$ was first proved by Andrews \cite{And2}. Following the same line of proving Theorem \ref{Ram type fun1}, one may also obtain the 2-dissection of the generating function of $w_{1}(n)$.
\end{remark}

\begin{theorem}\label{Ram type fun 3-dis}
For $t\equiv0\pmod{3}$,
\begin{align}
\sum_{n=0}^\infty w_t(3n)q^n &=\frac{f_2^4 f_3^4 f_{6}}{f_{1}^8 f_{t/3}^2}\left(\frac{f_2^2 f_3^6}{f_1^2 f_6^6}+10q\frac{f_1 f_6^3}{f_2 f_3^3}\right),\\
\sum_{n=0}^\infty w_t(3n+1)q^n &=4\frac{f_2^4 f_3^4 f_{6}}{f_{1}^8 f_{t/3}^2}\left(\frac{f_2 f_3^3}{f_1 f_6^3}+q\frac{f_1^2 f_6^6}{f_2^2 f_3^6}\right),\\
\sum_{n=0}^\infty w_t(3n+2)q^n &=9\frac{f_2^4 f_3^4 f_{6}}{f_{1}^8 f_{t/3}^2}.\label{eq:wt3n+2}
\end{align}
\end{theorem}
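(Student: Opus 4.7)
The plan is to proceed in the same spirit as Theorem \ref{Ram type fun1}, replacing the $2$-dissection of $1/f_1^4$ used there with a $3$-dissection. The key observation is that
\[
\frac{f_{2}^{5}}{f_{1}^{4}} \;=\; \psi(q)^{2}\cdot\frac{1}{\varphi(-q)},
\]
since $\psi(q)=f_{2}^{2}/f_{1}$ and $1/\varphi(-q)=f_{2}/f_{1}^{2}$. Hence the generating function in \eqref{2-dissection} factors as a product whose two non-trivial pieces already carry explicit $3$-dissections in \eqref{eq:psi-dis-3} and \eqref{eq:1/phi-dis-3}. Since $t\equiv 0\pmod{3}$, the factor $1/f_{t}^{2}$ is a power series in $q^{3}$ and becomes $1/f_{t/3}^{2}$ after the substitution $q^{3}\mapsto q$, so it contributes nothing to the dissection step itself.

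Write $X:=x(q^{3})$ for brevity. Squaring \eqref{eq:psi-dis-3} and multiplying by \eqref{eq:1/phi-dis-3} gives
\[
\frac{f_{2}^{5}}{f_{1}^{4}} \;=\; \psi(q^{9})^{2}\,\frac{\varphi(-q^{9})^{3}}{\varphi(-q^{3})^{4}}\left(\frac{1}{X^{2}}+\frac{2q}{X}+q^{2}\right)\left(1+2qX+4q^{2}X^{2}\right),
\]
and a direct expansion of the polynomial part yields
\[
\left(\frac{1}{X^{2}}+\frac{2q}{X}+q^{2}\right)\left(1+2qX+4q^{2}X^{2}\right)=\frac{1}{X^{2}}+\frac{4q}{X}+9q^{2}+10q^{3}X+4q^{4}X^{2}.
\]
The five resulting terms have powers of $q$ of residues $0,1,2,0,1$ modulo $3$, while $X$, $\psi(q^{9})^{2}$, $\varphi(-q^{9})^{3}$, and $\varphi(-q^{3})^{4}$ are all series in $q^{3}$, so the three arithmetic progressions separate cleanly.

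Extracting the $q^{3n+r}$ parts for $r=0,1,2$ and replacing $q^{3}$ by $q$, the common prefactor simplifies as
\[
\psi(q^{3})^{2}\,\frac{\varphi(-q^{3})^{3}}{\varphi(-q)^{4}}=\frac{f_{6}^{4}}{f_{3}^{2}}\cdot\frac{f_{3}^{6}}{f_{6}^{3}}\cdot\frac{f_{2}^{4}}{f_{1}^{8}}=\frac{f_{2}^{4}f_{3}^{4}f_{6}}{f_{1}^{8}},
\]
while from $(q;q^{2})_{\infty}=f_{1}/f_{2}$, $(q^{3};q^{6})_{\infty}=f_{3}/f_{6}$ and \eqref{Ram cubic frac} one obtains $x(q)=f_{1}f_{6}^{3}/(f_{2}f_{3}^{3})$. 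Substituting this into the three extracted polynomial pieces and reinstating the factor $1/f_{t/3}^{2}$ recovers the three identities; the $r=2$ case is especially short because the only contributing term $9q^{2}$ is $X$-free, which is precisely why \eqref{eq:wt3n+2} has no residual factor. The main obstacle is purely bookkeeping: keeping the product expansion tidy and correctly telescoping the prefactor through the $q^{3}\mapsto q$ substitution; no additional $q$-series identities are required beyond those already collected in the excerpt.
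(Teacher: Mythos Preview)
Your proof is correct and follows essentially the same approach as the paper: the paper also factors $f_{2}^{5}/f_{1}^{4}=\psi(q)^{2}/\varphi(-q)$, multiplies the $3$-dissections \eqref{eq:psi-dis-3} and \eqref{eq:1/phi-dis-3}, expands to obtain the same five-term polynomial in $x(q^{3})$ (recorded as Lemma~\ref{le:f25f14}), and then extracts residue classes with $1/f_{t}^{2}$ passing through as $1/f_{t/3}^{2}$. The only difference is organizational---the paper isolates the identity \eqref{key identity} as a separate lemma before deducing the theorem.
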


As a consequence of Theorem \ref{Ram type fun 3-dis}, we have

\begin{corollary}\label{t mod 3 thm}
For $t\equiv0\pmod{3}$,
\begin{align*}
w_t(3n+1)\equiv 0\pmod{4},\\
w_t(3n+2)\equiv 0\pmod{9}.
\end{align*}
\end{corollary}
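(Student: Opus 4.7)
The plan is to read both congruences straight off the identities already stated in Theorem \ref{Ram type fun 3-dis}. Its right-hand sides for the $3n+1$ and $3n+2$ arithmetic progressions come with explicit external factors of $4$ and $9$, respectively, so the proposal is simply: pull out these constants and check that the remaining $q$-series has integer coefficients.

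For $w_t(3n+2)$ the remainder after dividing by $9$ is $f_2^4 f_3^4 f_6 / (f_1^8 f_{t/3}^2)$. I would verify integrality of its coefficients by splitting it as $1/(f_1^8 f_{t/3}^2)$, which has nonnegative integer coefficients because $1/f_k^m$ is the generating function for $m$-colored partitions into multiples of $k$, times the Euler-type product $f_2^4 f_3^4 f_6$, which has integer coefficients coefficient-by-coefficient. The product is therefore a $q$-series with integer coefficients, which gives $w_t(3n+2)\equiv 0 \pmod 9$.

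For $w_t(3n+1)$ the same strategy works after distributing the common prefactor $\frac{f_2^4 f_3^4 f_6}{f_1^8 f_{t/3}^2}$ across the two summands in the parenthesis. The resulting two terms simplify to $\frac{f_2^5 f_3^7}{f_1^9 f_{t/3}^2 f_6^2}$ and $q\,\frac{f_2^2 f_6^7}{f_1^6 f_3^2 f_{t/3}^2}$; each is a colored-partition generating function $1/\bigl(f_1^a f_3^b f_6^c f_{t/3}^2\bigr)$ multiplied by a finite Euler product of $f_k$'s, and so each has integer coefficients. Hence the sum does as well, and $w_t(3n+1) \equiv 0 \pmod 4$.

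There is no real obstacle here, because the congruences are immediate corollaries of the explicit $3$-dissection in Theorem \ref{Ram type fun 3-dis}; the only place the hypothesis $t \equiv 0 \pmod 3$ enters is in making $t/3$ a positive integer so that $f_{t/3}$ is defined. The conceptual content is concentrated entirely in Theorem \ref{Ram type fun 3-dis} itself, whose proof is a (non-trivial but standard) $q$-series computation using the $3$-dissection tools \eqref{eq:psi-dis-3}--\eqref{eq:1/phi-dis-3} already recorded in this section.
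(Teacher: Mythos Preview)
Your proposal is correct and matches the paper's own treatment: the paper states Corollary~\ref{t mod 3 thm} immediately after Theorem~\ref{Ram type fun 3-dis} with the words ``As a consequence of Theorem~\ref{Ram type fun 3-dis}'' and gives no further argument, leaving the integrality of the residual $q$-series implicit. Your explicit verification that each residual series factors as an integer-coefficient Euler product times a colored-partition generating function is a harmless (and arguably welcome) elaboration of what the paper takes for granted.
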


To prove Theorem \ref{Ram type fun 3-dis}, we need to show the following lemma.

\begin{lemma}\label{le:f25f14}
We have
\begin{align}\label{key identity}
\frac{f_{2}^5}{f_{1}^4} =\frac{f_6^4 f_9^4 f_{18}}{f_{3}^8}\Bigg(\frac{1}{x(q^3)^2}+\frac{4q}{x(q^3)}+9q^2+10q^3x(q^3)+4q^4x(q^3)^2\Bigg).
\end{align}
\end{lemma}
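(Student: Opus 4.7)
The plan is to recognize that $\dfrac{f_2^5}{f_1^4}$ factors as a product of the two quantities for which 3-dissections have already been recorded. Specifically, using $\psi(q)=\dfrac{f_2^2}{f_1}$ and $\varphi(-q)=\dfrac{f_1^2}{f_2}$, one checks immediately that
\[
\frac{f_2^5}{f_1^4} \;=\; \frac{\psi(q)^2}{\varphi(-q)}.
\]
So the strategy is: multiply the square of the 3-dissection \eqref{eq:psi-dis-3} by the 3-dissection \eqref{eq:1/phi-dis-3}, then simplify the resulting product of theta functions at $q^3$ and $q^9$ back into the $f_k$ notation.

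First I would square \eqref{eq:psi-dis-3} to get
\[
\psi(q)^2 \;=\; \psi(q^9)^2\!\left(\frac{1}{x(q^3)^2}+\frac{2q}{x(q^3)}+q^2\right).
\]
Then I would multiply this by \eqref{eq:1/phi-dis-3}. Setting $y:=x(q^3)$ for brevity, the bracketed factors combine as
\[
\Bigl(\tfrac{1}{y^2}+\tfrac{2q}{y}+q^2\Bigr)\bigl(1+2qy+4q^2y^2\bigr)
\;=\;\frac{1}{y^2}+\frac{4q}{y}+9q^2+10q^3 y+4q^4 y^2,
\]
which is exactly the bracket appearing in \eqref{key identity}. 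This is a short, mechanical expansion, but it is the core computational step; the only thing that could go wrong is a small bookkeeping slip in collecting the nine cross-terms, and the ``9'' middle coefficient (arising as $4+4+1$) is a good sanity check since it matches the constant in \eqref{eq:wt3n+2}.

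Finally, I would simplify the remaining prefactor using $\psi(q^9)=\dfrac{f_{18}^2}{f_9}$, $\varphi(-q^9)=\dfrac{f_9^2}{f_{18}}$, and $\varphi(-q^3)=\dfrac{f_3^2}{f_6}$:
\[
\psi(q^9)^2\cdot\frac{\varphi(-q^9)^3}{\varphi(-q^3)^4}
\;=\;\frac{f_{18}^4}{f_9^2}\cdot\frac{f_9^6/f_{18}^3}{f_3^8/f_6^4}
\;=\;\frac{f_6^4\,f_9^4\,f_{18}}{f_3^8}.
\]
Combining this with the bracketed polynomial in $y=x(q^3)$ gives exactly the right-hand side of \eqref{key identity}, completing the proof. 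The main obstacle is really just the careful expansion; there is no genuine difficulty beyond recognizing the factorization $f_2^5/f_1^4=\psi(q)^2/\varphi(-q)$ that aligns the identity with the two available 3-dissections.
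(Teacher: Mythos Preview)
Your proposal is correct and follows essentially the same approach as the paper: write $f_2^5/f_1^4=\psi(q)^2/\varphi(-q)$, substitute the 3-dissections \eqref{eq:psi-dis-3} and \eqref{eq:1/phi-dis-3}, expand the product of brackets, and collapse the theta prefactor into $f_6^4 f_9^4 f_{18}/f_3^8$. Your write-up is in fact slightly more explicit than the paper's, which suppresses the intermediate expansion and prefactor simplification.
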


\begin{proof}
By Eqs.~\eqref{eq:psi-dis-3} and \eqref{eq:1/phi-dis-3}, we have
\begin{align*}
\frac{f_{2}^5}{f_{1}^4} &=\frac{\psi(q)^2}{\varphi(-q)}\\
&=\psi(q^9)^2\left(\frac{1}{x(q^3)}+q\right)^2 \frac{\varphi(-q^9)^3}{\varphi(-q^3)^4}\left(1+2qx(q^3)+4q^2x(q^3)^2\right)\\
&=\frac{f_6^4 f_9^4 f_{18}}{f_{3}^8}\Bigg(\frac{1}{x(q^3)^2}+\frac{4q}{x(q^3)}+9q^2+10q^3x(q^3)+4q^4x(q^3)^2\Bigg).
\end{align*}
This establishes \eqref{key identity}.
\end{proof}

By Lemma \ref{le:f25f14} and Eq. \eqref{Ram cubic frac}, we have the following

\begin{corollary}\label{cor:3dis}
Let
$$\sum_{n=0}^\infty a(n)q^n=\frac{f_{2}^5}{f_{1}^4}.$$
Then
\begin{align*}
\sum_{n=0}^\infty a(3n)q^n&=\frac{f_2^4 f_3^4 f_{6}}{f_{1}^8}\left(\frac{f_2^2 f_3^6}{f_1^2 f_6^6}+10q\frac{f_1 f_6^3}{f_2 f_3^3}\right),\\
\sum_{n=0}^\infty a(3n+1)q^n&=4\frac{f_2^4 f_3^4 f_{6}}{f_{1}^8}\left(\frac{f_2 f_3^3}{f_1 f_6^3}+q\frac{f_1^2 f_6^6}{f_2^2 f_3^6}\right),\\
\sum_{n=0}^\infty a(3n+2)q^n&=9\frac{f_2^4 f_3^4 f_{6}}{f_{1}^8},
\end{align*}
and hence
\begin{align*}
a(3n+1)\equiv 0\pmod{4},\\
a(3n+2)\equiv 0\pmod{9}.
\end{align*}
\end{corollary}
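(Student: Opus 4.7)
The plan is to derive the three $3$-dissection formulas by directly extracting arithmetic progressions of exponents from the identity in Lemma~\ref{le:f25f14}, after which the congruences $a(3n+1)\equiv 0\pmod{4}$ and $a(3n+2)\equiv 0\pmod{9}$ fall out immediately from the explicit integer prefactors.

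First I would observe that the outer factor $\frac{f_6^4 f_9^4 f_{18}}{f_3^8}$ is a power series in $q^3$, and that by the definition \eqref{Ram cubic frac} the function $x(q^3)$ is a power series in $q^3$ with constant term $1$. Hence each of $x(q^3)^{\pm 1}$ and $x(q^3)^{\pm 2}$ involves only exponents of $q$ divisible by $3$. Consequently, inside the five-term bracket
\[
\frac{1}{x(q^3)^2}+\frac{4q}{x(q^3)}+9q^2+10q^3x(q^3)+4q^4x(q^3)^2,
\]
the residue class modulo $3$ of each term is dictated by its $q$-prefactor: the $q^0$- and $q^3$-terms contribute only to class $0$, the $q^1$- and $q^4$-terms only to class $1$, and the solitary $9q^2$-term only to class $2$.

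Next I would extract the progressions $q^{3n}$, $q^{3n+1}$, $q^{3n+2}$, pull out a factor of $q^0$, $q$, $q^2$ as appropriate, and then replace $q^3$ by $q$ throughout. The resulting three expressions each involve $x(q)$, and by \eqref{Ram cubic frac} we have $x(q)=\frac{f_1 f_6^3}{f_2 f_3^3}$, so that $1/x(q)=\frac{f_2 f_3^3}{f_1 f_6^3}$ and analogously for the squares. Substituting these back converts everything into the $f_k$ notation used in the paper and reproduces verbatim the three claimed formulas.

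Finally, the two congruences follow by inspection: the formulas for $\sum_{n\ge 0} a(3n+1)q^n$ and $\sum_{n\ge 0} a(3n+2)q^n$ carry explicit factors of $4$ and $9$ respectively, while the remaining eta-quotient expressions manifestly have integer power-series coefficients. I do not foresee any genuine obstacle here; the only care required is the bookkeeping of which term in the five-term bracket falls into which residue class modulo $3$, and the algebraic substitution of $x(q)$ in terms of $f_k$ at the very end.
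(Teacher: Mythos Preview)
Your proposal is correct and follows exactly the approach the paper intends: the corollary is stated as an immediate consequence of Lemma~\ref{le:f25f14} together with Eq.~\eqref{Ram cubic frac}, and you have simply spelled out the (routine) extraction of residue classes and the substitution $x(q)=f_1 f_6^3/(f_2 f_3^3)$ that the paper leaves implicit.
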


\begin{proof}[Proof of Theorem \ref{Ram type fun 3-dis}]
We notice that
\begin{align*}
\sum_{n=0}^\infty w_t(n)q^n=\frac{f_{2}^5}{f_{1}^4f_{t}^2}.
\end{align*}
Since $t\equiv 0\pmod{3}$, Theorem \ref{Ram type fun 3-dis} is a direct consequence of Corollary \ref{cor:3dis}.
\end{proof}

With the help of Eq.~\eqref{eq:wt3n+2}, we also have

\begin{theorem}
For all $n\geq0$,
\begin{equation}\label{eq:w3:24n+23}
w_3(24n+23)\equiv 0 \pmod{27}.
\end{equation}
\end{theorem}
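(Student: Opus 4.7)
The plan is to exploit the $t=3$ specialization of Theorem \ref{Ram type fun 3-dis}, which gives
\[
\sum_{n\ge 0}w_3(3n+2)\,q^n = 9\,\frac{f_2^4 f_3^4 f_6}{f_1^{10}}.
\]
Since $24n+23=3(8n+7)+2$, we have $w_3(24n+23)=9\cdot[q^{8n+7}]\bigl(f_2^4 f_3^4 f_6/f_1^{10}\bigr)$, so the problem reduces to proving
\[
[q^{8n+7}]\,\frac{f_2^4 f_3^4 f_6}{f_1^{10}}\equiv 0\pmod 3.
\]
The prefactor of $9$ already in hand supplies the ``easy'' part of $27$; only an extra factor of $3$ remains to be extracted.

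First I would simplify modulo $3$ using the Frobenius-type congruence $f_k^3\equiv f_{3k}$: replacing $f_3^4\equiv f_1^{12}$ and $f_6\equiv f_2^3$ collapses the expression into $f_1^2 f_2^7\pmod 3$. Next, apply the $2$-dissection \eqref{eq:f12} of $f_1^2$; the even-exponent summand $f_2 f_8^5/(f_4^2 f_{16}^2)$ produces only even powers of $q$ after multiplication by $f_2^7$, so only the odd-exponent summand $-2q\,f_2 f_{16}^2/f_8$ can contribute, and after $q^2\mapsto q$ the task becomes to show $[q^{4n+3}]\,f_1^8 f_8^2/f_4\equiv 0\pmod 3$. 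A second modulo $3$ simplification ($f_1^8\equiv f_1^2 f_3^2$) followed by simultaneous $2$-dissections of both $f_1^2$ and $f_3^2$ (the latter via \eqref{eq:f12} with $q\mapsto q^3$) produces four cross-terms; only the two with odd total exponent parity can feed into $[q^{4n+3}]$, and after one more $q^2\mapsto q$ the question becomes whether
\[
[q^{2n}]\,\frac{f_1 f_3 f_4^7 f_{24}^2}{f_2^3 f_8^2 f_{12}} + [q^{2n+1}]\,\frac{f_1 f_3 f_4 f_8^2 f_{12}^5}{f_2 f_6^2 f_{24}^2} \equiv 0\pmod 3.
\]

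The endgame is a last modulo $3$ simplification via $f_k^3\equiv f_{3k}$, which collapses these two series to $f_1^4 f_4^4 f_8^4/f_2^3$ and $f_1^4 f_4^{16}/(f_2^7 f_8^4)$, respectively; now apply the $2$-dissection \eqref{eq:f14} of $f_1^4$ to each. Keeping only the correct parity class, both expressions reduce (after $q^2\mapsto q$) to $[q^n]\,f_2^{14}/f_1^5$, with respective scalar coefficients congruent to $1$ and $2$ modulo $3$. Their sum is therefore $3\,[q^n]\,f_2^{14}/f_1^5\equiv 0\pmod 3$, which is the decisive cancellation and yields $w_3(24n+23)\equiv 0\pmod{27}$.

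The main obstacle is not the individual dissection steps---each is a direct application of \eqref{eq:f12} or \eqref{eq:f14}---but rather the careful parity bookkeeping across three successive iterations of $2$-dissection, interleaved with three modulo $3$ reductions that keep the intermediate expressions manageable. The crucial ``miracle'' is the coincidence $1+2\equiv 0\pmod 3$ at the very last step; without it the whole strategy would fail, and it is this cancellation that ultimately powers the congruence.
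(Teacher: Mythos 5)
Your proof is correct, and its skeleton is the same as the paper's: specialize Theorem \ref{Ram type fun 3-dis} at $t=3$ to pull out the factor $9$, reduce $f_2^4f_3^4f_6/f_1^{10}$ to $f_1^2f_2^7$ modulo $3$, and then extract the arithmetic progression $8n+7$ by iterated $2$-dissection. Where you diverge is at the second dissection: the paper keeps $f_1^8$ intact, dissects it as $(f_1^4)^2$ via \eqref{eq:f14}, and finds that the odd part of $-2f_1^8f_8^2/f_4$ is exactly $16qf_4^7f_8^2$ --- a series supported on even powers after the substitution $q^2\mapsto q$ --- so the $8n+7$ coefficients vanish with no further work and, in fact, identically (not merely modulo $3$) at that stage. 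You instead reduce $f_1^8\equiv f_1^2f_3^2\pmod 3$, run a third round of dissection, and close with the cancellation $1+(-4)\equiv 0\pmod 3$ between the two surviving cross-terms; I checked the intermediate series $f_1f_3f_4^7f_{24}^2/(f_2^3f_8^2f_{12})$ and $f_1f_3f_4f_8^2f_{12}^5/(f_2f_6^2f_{24}^2)$ and the final collapse to $3\,[q^n]f_2^{14}/f_1^5$, and all of it is right. The trade-off is that the paper's route is one dissection shorter and exposes the slightly stronger structural fact that the relevant series has no odd-power terms at all, whereas yours genuinely needs the terminal modulo-$3$ coincidence; but nothing in your argument is a gap.
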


\begin{proof}
Notice that
$$\sum_{n=0}^\infty w_t(3n+2)q^n=9\frac{f_2^4 f_3^4 f_{6}}{f_{1}^8 f_{t/3}^2}.$$
Hence to prove \eqref{eq:w3:24n+23}, it suffices to show
$$a_1(8n+7)\equiv 0\pmod{3},$$
where
$$\sum_{n=0}^\infty a_1(n)q^n=\frac{f_2^4 f_3^4 f_{6}}{f_{1}^{10}}.$$

With the help of \eqref{eq:f12}, it follows that, modulo $3$,
\begin{align*}
\sum_{n=0}^\infty a_1(n)q^n&=\frac{f_2^4 f_3^4 f_{6}}{f_{1}^{10}}\equiv f_1^2 f_2^7=\left(\frac{f_2f_8^5}{f_4^2f_{16}^2}-2q\frac{f_2f_{16}^2}{f_8}\right)f_2^7\\
&=\frac{f_2^8 f_8^5}{f_4^2f_{16}^2}-2q\frac{f_2^8 f_{16}^2}{f_8}.
\end{align*}

We now extract terms involving $q^{2n+1}$ and replace $q^2$ by $q$, then
\begin{align*}
\sum_{n=0}^\infty a_1(2n+1)q^n&\equiv-2\frac{f_1^8 f_{8}^2}{f_4}.
\end{align*}
Through a similar argument, we have
$$\sum_{n=0}^\infty a_1(4n+3)q^n\equiv16f_2^7 f_4^2,$$
which contains no terms of the form $q^{2n+1}$. Hence $a_1(4(2n+1)+3)=a_1(8n+7)\equiv 0\pmod{3}$.
\end{proof}
\begin{remark}
As pointed out by the referee, \eqref{eq:w3:24n+23} still holds modulo $3^6=729$, which can be proved via modular forms. However, it is unclear whether there is an elementary proof.
\end{remark}

We also have some congruences modulo $5$.

\begin{theorem}\label{Thm:mod 5}
For all $n\geq0$,
\begin{align}
&w_t(5n+3)\equiv w_t(5n+4)\equiv 0\pmod{5},\quad \emph{if } t\equiv 0\pmod{5}, \label{colored part mod 5(1)}\\
&w_t(5n+4)\equiv 0\pmod{5},\quad \emph{if } t\equiv 1\pmod{5},\label{colored part mod 5(2)}\\
&w_t(5n+3)\equiv 0\pmod{5},\quad \emph{if } t\equiv 4\pmod{5}.\label{colored part mod 5(3)}
\end{align}
\end{theorem}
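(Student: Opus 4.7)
My plan is to reduce the generating function $f_2^5/(f_1^4 f_t^2)$ modulo~$5$ using the congruence $f_k^5\equiv f_{5k}\pmod{5}$ (the usual ``freshman's dream''), and then to combine the pentagonal number theorem with Jacobi's identity to show that certain residue classes of the exponents can never be hit with a nonzero coefficient.

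For the first step, I will use $f_2^5\equiv f_{10}$, $f_1^4=f_1^5/f_1\equiv f_5/f_1$, and $f_t^2=f_t^5/f_t^3\equiv f_{5t}/f_t^3$, all modulo~$5$, to obtain
\begin{equation*}
\sum_{n=0}^\infty w_t(n)q^n \;\equiv\; \frac{f_1\,f_{10}\,f_t^3}{f_5\,f_{5t}}\pmod{5}.
\end{equation*}
The factor $f_{10}/(f_5 f_{5t})$ is a formal power series in $q^5$ alone, so the residue classes modulo~$5$ of exponents contributing to the right-hand side are exactly those of $f_1\cdot f_t^3$.

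For the second step, Euler's pentagonal number theorem $f_1=\sum_{k\in\mathbb{Z}}(-1)^k q^{k(3k-1)/2}$ gives, after a short case check on $k\pmod{5}$, that $k(3k-1)/2\pmod{5}\in\{0,1,2\}$. Jacobi's identity $f_t^3=\sum_{n\ge 0}(-1)^n(2n+1)q^{tn(n+1)/2}$ is the main tool: since $2n+1\equiv 0\pmod{5}$ precisely when $n\equiv 2\pmod{5}$, those terms drop out modulo~$5$; and for $n\not\equiv 2\pmod{5}$ one checks that $n(n+1)/2\pmod{5}\in\{0,1\}$. Multiplying by $t$, the surviving exponent residues of $f_t^3$ modulo~$5$ lie in $\{0\}$ when $t\equiv 0\pmod{5}$, in $\{0,1\}$ when $t\equiv 1\pmod{5}$, and in $\{0,4\}$ when $t\equiv 4\pmod{5}$. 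Summing these with $\{0,1,2\}$, the possible residues for $f_1 f_t^3$ are $\{0,1,2\}$, $\{0,1,2,3\}$, and $\{0,1,2,4\}$ respectively, which miss $\{3,4\}$, $\{4\}$, and $\{3\}$; this produces \eqref{colored part mod 5(1)}--\eqref{colored part mod 5(3)}.

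The main obstacle is choosing the right initial reduction: the simpler $f_2^5/(f_1^4 f_t^2)\equiv f_1 f_{10}/(f_5 f_t^2)\pmod{5}$ only handles the case $t\equiv 0\pmod{5}$, since otherwise $f_t^{-2}$ is not a $q^5$-series. The key move is to pull a factor $f_t^3$ into the numerator via $f_t^5\equiv f_{5t}\pmod{5}$, which transfers the delicate non-$q^5$ information into a series whose exponents are controlled by Jacobi's identity and whose $n\equiv 2\pmod{5}$ contribution vanishes modulo~$5$ for free.
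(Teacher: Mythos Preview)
Your argument is correct and coincides with the paper's own proof: both reduce $f_2^5/(f_1^4 f_t^2)$ modulo~$5$ via $f_k^5\equiv f_{5k}$ to $\dfrac{f_1\,f_{10}\,f_t^3}{f_5\,f_{5t}}$, then separate the $q^5$-series factor and analyze the exponent residues of $f_1$ and $f_t^3$ modulo~$5$ using Euler's pentagonal number theorem and Jacobi's identity (with the key observation that the $n\equiv 2\pmod{5}$ term of $f_t^3$ vanishes modulo~$5$). The only cosmetic difference is that the paper handles the case $t\equiv 0\pmod{5}$ first with the simpler reduction $f_1\cdot f_{10}/(f_5 f_t^2)$ before introducing $f_t^3$ for $t\equiv 1,4\pmod{5}$, whereas you treat all three cases uniformly through the $f_t^3$ framework; this is not a substantive divergence.
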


\begin{proof}
If $t\equiv 0\pmod{5}$, we have
$$\sum_{n=0}^\infty w_t(n)q^n\equiv \f{1}\frac{\f{10}}{\f{5}\f{t}^2} \pmod{5}.$$
From Euler's pentagonal number theorem \cite[p. 11, Corollary 1.7]{And1}, which tells
\begin{align}\label{EPNT}
(q;q)_\infty=\sum_{n=-\infty}^\infty (-1)^n q^{n(3n+1)/2},
\end{align}
we notice that $(q;q)_\infty$ has no terms in which the power of $q$ is $3$ or $4$ mod $5$. Hence
$$w_{t}(5n+3)\equiv 0 \pmod{5},$$
and
$$w_{t}(5n+4)\equiv 0 \pmod{5}.$$

We next notice that
\begin{align*}
\sum_{n=0}^\infty w_t(n)q^n&\equiv \f{1}\f{t}^3\frac{\f{10}}{\f{5}\f{5t}} \pmod{5}.
\end{align*}
Recall that the Jacobi's identity \cite[p. 14, Theorem 1.3.9]{Ber} tells
\begin{equation}\label{JI}
(q;q)_\infty^3=\sum_{n=0}^\infty (-1)^n(2n+1)q^{n(n+1)/2}.
\end{equation}
From \eqref{EPNT} and \eqref{JI}, one readily has
$$(q;q)_\infty=E_0+E_1+E_2,$$
where $E_i$ consists of those terms in which the power of $q$ is $i$ modulo $5$, and
$$(q;q)_\infty^3=J_0+J_1+J_3,$$
where $J_i$ consists of those terms in which the power of $q$ is $i$ modulo $5$. Furthermore, we note that $J_3\equiv 0\pmod{5}$, so
$$(q;q)_\infty^3\equiv J_0+J_1 \pmod{5}.$$

If $t\equiv 1\pmod{5}$, we observe that
$$\f{t}^3\equiv J_0^*+J_1^* \pmod{5},$$
where $J_i^*$ consists of terms in which the power of $q$ is $i$ modulo $5$. It follows that,
$$\f{1}\f{t}^3\equiv (E_0+E_1+E_2)(J_0^*+J_1^*)\pmod{5},$$
which contains no terms of the form $q^{5n+4}$. Hence, $w_t(5n+4)\equiv 0\pmod{5}$.

If $t\equiv 4\pmod{5}$, we have
$$\f{t}^3\equiv J_0^*+J_4^* \pmod{5}.$$
The rest of the proof is similar.
\end{proof}

For $w_2(n)$, we have the following mod $7$ congruence.

\begin{theorem}\label{Thm:mod 7}
For all $n\geq0$,
\begin{align}
w_{2}(7n+4) &\equiv0\pmod{7}.
\end{align}
\end{theorem}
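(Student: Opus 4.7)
The plan is to reduce $\sum w_2(n)q^n$ modulo $7$ to a form where the coefficients of $q^{7n+4}$ are manifestly divisible by $7$, using $f_1^7\equiv f_7\pmod 7$, Jacobi's identity, and a quadratic-residue argument modulo $7$.

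Specializing $t=2$ in \eqref{7 colored gf} yields
$$\sum_{n=0}^{\infty}w_2(n)q^n=\frac{f_2^3}{f_1^4}.$$
Multiplying numerator and denominator by $f_1^3$ and using the Frobenius congruence $f_1^7\equiv f_7\pmod 7$ gives
$$\frac{f_2^3}{f_1^4}=\frac{f_1^3 f_2^3}{f_1^7}\equiv\frac{f_1^3 f_2^3}{f_7}\pmod 7.$$
Since $1/f_7$ is a power series in $q^7$, it does not mix residue classes of exponents modulo $7$, so it suffices to prove that the coefficient of $q^{7n+4}$ in $f_1^3 f_2^3$ is divisible by $7$ for every $n\geq 0$.

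Next I would apply Jacobi's identity \eqref{JI} to both $f_1^3$ and $f_2^3$, obtaining
$$f_1^3 f_2^3=\sum_{m,n\geq 0}(-1)^{m+n}(2m+1)(2n+1)\,q^{m(m+1)/2+n(n+1)}.$$
A monomial here contributes to $q^{7k+4}$ exactly when $m(m+1)/2+n(n+1)\equiv 4\pmod 7$. Multiplying this congruence by $8$ and completing the square transforms it into
$$(2m+1)^2+2(2n+1)^2\equiv 0\pmod 7.$$
The quadratic residues modulo $7$ are $\{0,1,2,4\}$, so $-2\equiv 5\pmod 7$ is a non-residue. Hence every solution must satisfy $2m+1\equiv 0\pmod 7$ and consequently $2n+1\equiv 0\pmod 7$ as well. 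In particular the weight $(2m+1)(2n+1)$ is divisible by $7$ (even by $49$), which finishes the argument.

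The main obstacle, though mild relative to the rest of the paper, is the algebraic step of identifying the quadratic form $(2m+1)^2+2(2n+1)^2$ after clearing fractions, together with the verification that $-2$ is a non-residue modulo $7$. The overall strategy of combining $f_1^p\equiv f_p\pmod p$ with Jacobi's triple product mirrors the mod $5$ manipulations already used for Theorem~\ref{Thm:mod 5}, so this proof fits naturally into the paper's $q$-series toolkit and no heavier machinery (such as modular forms) is required.
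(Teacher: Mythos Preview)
Your proof is correct and follows essentially the same route as the paper: reduce $\sum w_2(n)q^n$ modulo $7$ to $f_1^3 f_2^3/f_7$ via $f_1^7\equiv f_7$, expand with Jacobi's identity, and check that the residue class $4$ modulo $7$ only arises with coefficients divisible by $7$. The only cosmetic difference is that the paper carries out the residue analysis by listing the classes $\{0,1,3\}$ for $f_1^3$ and $\{0,2,6\}$ for $f_2^3$ (after discarding the class whose coefficient is $\equiv 0$) and observing that no pair sums to $4$, whereas you package the same computation as the quadratic-form statement $(2m+1)^2+2(2n+1)^2\equiv 0\pmod 7$ together with the fact that $-2$ is a non-residue.
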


\begin{proof}
Notice that
\begin{equation*}
\sum_{n=0}^{\infty}w_{2}(n)q^{n} =\frac{\f{2}}{(q;q^{2})_{\infty}^{4}\f{2}^{2}}=\frac{\f{2}^{3}}{\f{1}^{4}}\equiv\frac{\f{1}^{3}\f{2}^{3}}{\f{7}} \pmod{7}.
\end{equation*}

Similarly, \eqref{JI} tells
$$\f{1}^3\equiv J_0+J_1+J_3\pmod{7},$$
and hence
$$\f{2}^3\equiv J_0^*+J_2^*+J_6^* \pmod{7}.$$
This time $J_i$ and $J_i^*$ consist of terms in which the power of $q$ is $i$ modulo $7$.

It follows that, modulo $7$,
$$\f{1}^3\f{2}^3\equiv (J_0+J_1+J_3)(J_0^*+J_2^*+J_6^*),$$
which contains no terms of the form $q^{7n+4}$. Hence, $w_2(7n+4)\equiv 0\pmod{7}$.
\end{proof}


Finally, we have a mod $11$ congruence for $w_2(n)$.

\begin{theorem}\label{Thm:mod 11}
For all $n\geq0$,
\begin{align}
w_{2}(11n+10) &\equiv0\pmod{11}.\label{mod 11 cong}
\end{align}
\end{theorem}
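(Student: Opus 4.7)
The plan is to mimic the proof of Theorem~\ref{Thm:mod 7}. Starting from
\begin{equation*}
\sum_{n=0}^{\infty} w_2(n) q^n = \frac{f_2^3}{f_1^4}
\end{equation*}
and using the Frobenius-style congruence $f_1^{11} \equiv f_{11} \pmod{11}$, I would rewrite
\begin{equation*}
\frac{f_2^3}{f_1^4} = \frac{f_1^7 f_2^3}{f_1^{11}} \equiv \frac{f_1^7 f_2^3}{f_{11}} \pmod{11}.
\end{equation*}
Since $f_{11}$ only contains monomials of the form $q^{11k}$, the theorem reduces to showing that $f_1^7 f_2^3$ has no monomial of the form $q^{11n+10}$ modulo $11$.

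Applying Jacobi's identity \eqref{JI} together with the observation that $n(n+1)/2 \equiv 4 \pmod{11}$ forces $n \equiv 5 \pmod{11}$, at which the coefficient $(2n+1)$ vanishes modulo $11$, one obtains
\begin{equation*}
f_1^3 \equiv J_0 + J_1 + J_3 + J_6 + J_{10} \pmod{11},
\end{equation*}
where $J_i$ collects those terms whose $q$-exponents are $\equiv i \pmod{11}$. Analogously,
\begin{equation*}
f_2^3 \equiv J_0^* + J_1^* + J_2^* + J_6^* + J_9^* \pmod{11}.
\end{equation*}
Factoring $f_1^7 = (f_1^3)^2 \cdot f_1$ and invoking Euler's pentagonal number theorem for $f_1$, whose exponents modulo $11$ lie in $\{0, 1, 2, 4, 5, 7\}$, the product $f_1^7 f_2^3$ can be expanded as a quadruple convolution indexed by two Jacobi sums, one pentagonal sum, and one further Jacobi sum.

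The hardest step is demonstrating that the total contribution modulo $11$ to each monomial $q^{11n+10}$ actually vanishes. Unlike the mod~$7$ situation, where the pairwise sum set $\{i + \ell : i \in \{0,1,3\},\ \ell \in \{0,2,6\}\}$ manifestly omits $4 \pmod{7}$, the corresponding quadruple sum set modulo $11$ covers every residue class, $10$ included. One must therefore exploit cancellations among the integer weights $(2n_1+1)(2n_2+1)(2m+1)$ attached to each summand, for example via a sign-reversing involution on the quadruple $(k, n_1, n_2, m)$ or a careful case analysis leveraging quadratic-residue properties modulo $11$ applied to the associated form $\ell^2 + 3a^2 + 3b^2 + 6d^2 \equiv 0 \pmod{11}$, where $\ell = 6k-1$, $a = 2n_1+1$, $b = 2n_2+1$, $d = 2m+1$. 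I expect this cancellation argument to constitute the main technical obstacle, marking the principal departure of the mod~$11$ proof from the more elementary mod~$5$ and mod~$7$ treatments.
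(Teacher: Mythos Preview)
Your setup is correct: writing $\sum w_2(n)q^n = f_2^3/f_1^4 \equiv f_1^7 f_2^3/f_{11}\pmod{11}$ reduces the congruence to showing that the $q^{11n+10}$--coefficients of $f_1^7 f_2^3$ vanish modulo $11$, and your residue computations for $f_1^3$, $f_2^3$, $f_1$ and the quaternary form $\ell^2+3a^2+3b^2+6d^2\equiv 0\pmod{11}$ are all accurate. However, the proposal stops precisely where the actual work lies. You yourself observe that, unlike the mod~$7$ case, every residue class---including $10$---is hit by the quadruple sum set, so the argument cannot conclude by absence of representations; it must come from cancellation among the weights $\pm(2n_1+1)(2n_2+1)(2m+1)$. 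You do not carry out that cancellation, and neither a sign-reversing involution nor a quadratic-residue case split is exhibited. As written, this is a plan with its key step missing, not a proof.

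The paper does \emph{not} attempt this route. Instead it rewrites $f_2^3/f_1^4\equiv f_2^{14}/(f_1^4 f_{22})\pmod{11}$ via $f_2^{11}\equiv f_{22}$ and then invokes Theorem~2 of Cooper--Hirschhorn--Lewis \cite{CHL2000} with $p=11$, $(r,s)=(-4,14)$; that theorem gives the exact relation $a_2(11n+120)=11^4\,a_2(n/11)$ for the coefficients of $f_2^{14}/f_1^4$, whence $a_2(11n+10)\equiv 0\pmod{11}$ after checking finitely many initial cases. Thus the authors sidestep exactly the obstacle you flag by importing a structural result about lacunary eta-quotients, rather than performing an elementary coefficient cancellation. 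Whether your intended elementary argument can be completed is, as far as the paper indicates, open; indeed the authors later remark that they lack a combinatorial (multirank/crank-type) explanation of \eqref{mod 11 cong}, which is consistent with the mod~$11$ case resisting the direct methods that succeed modulo~$5$ and~$7$.
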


\begin{proof}
It is easy to see that
\begin{equation*}
\sum_{n=0}^{\infty}w_{2}(n)q^{n} =\frac{\f{2}^{3}}{\f{1}^{4}}\equiv\frac{1}{\f{22}}\frac{\f{2}^{14}}{\f{1}^4} \pmod{11}.
\end{equation*}
Hence it suffices to prove that
$$a_2(11n+10)\equiv 0 \pmod{11},$$
where
$$\sum_{n=0}^\infty a_2(n)q^n = \frac{\f{2}^{14}}{\f{1}^4}.$$

According to Theorem 2 in \cite{CHL2000}, let $p=11$ and $(r,s)=(-4,14)$, we have
$$a_2(11n+120)=11^4 a_2\left(\frac{n}{11}\right),$$
where $a_2(\alpha)=0$ is $\alpha$ is not a nonnegative integer. This along with the verification of the first several $n$'s finish our proof.
\end{proof}

\section{Combinatorial interpretations}\label{sec:com inter}
\subsection{A unified multirank}
In this section, we give combinatorial interpretations of Theorems \ref{Thm:mod 5} and \ref{Thm:mod 7}. In doing so, we introduce a multirank and a vector crank for weighted 7-colored partitions. This multirank (resp.~vector crank) function enables us to divide its corresponding partition set into five (resp. seven) equivalence classes.

For a given partition $\lambda$, we let $\ell(\lambda)$ denote the number of parts in $\lambda$ and $\sigma(\lambda)$ denote the sum of all parts in $\lambda$ with the convention $\ell(\lambda)=\sigma(\lambda)=0$ for empty partition $\lambda$ of 0. Let $\mathcal{P}$ denote the set of all ordinary partitions, and $\mathcal{O}$ (resp. $\mathcal{DE}$, $\mathcal{DO}$) denote the set of all partitions into odd parts (resp. distinct even parts, distinct odd parts).

Let
\begin{align*}
\mathcal{V}_{t}=\{(\lambda_{1},\lambda_{2},\lambda_{3},\lambda_{4},\lambda_{5},t\lambda_{6},t\lambda_{7})|\lambda_{1}\in\mathcal{DE},\lambda_{2},\lambda_{3},
\lambda_{4},\lambda_{5}\in\mathcal{O},\lambda_{6},\lambda_{7}\in\mathcal{P}\}.
\end{align*}
For a vector (weighted 7-colored) partition $\overrightarrow{\lambda}\in\mathcal{V}_{t}$, we define the sum of parts function $s$, the weight function $wt$, and the multirank function $r_{7}$ by
\begin{align}
s(\overrightarrow{\lambda}) &=\sigma(\lambda_{1})+\sigma(\lambda_{2})+\sigma(\lambda_{3})+\sigma(\lambda_{4})+\sigma(\lambda_{5})+t\sigma(\lambda_{6})+t\sigma(\lambda_{7}),\\
wt(\overrightarrow{\lambda}) &=(-1)^{\ell(\lambda_{1})},\\
r_{7}(\overrightarrow{\lambda}) &=\ell(\lambda_{2})-\ell(\lambda_{3})+2(\ell(\lambda_{4})-\ell(\lambda_{5}))+2\left(\ell(\lambda_{6})-\ell(\lambda_{7})\right).\label{multirank}
\end{align}

\begin{remark}
We note that for the purpose of combinatorially obtaining Ramanujan-type congruences, there are a number of alternatives that will work equally well as multirank $r_{7}$. For example, we can use $\ell(\lambda_{2})-\ell(\lambda_{3})+2(\ell(\lambda_{4})-\ell(\lambda_{5}))+h(\ell(\lambda_{6})-\ell(\lambda_{7}))$ for $h\not\equiv0\pmod{5}$ to replace the right hand side of $r_{7}$ in \eqref{multirank}.
\end{remark}

The weighted count of vector partitions of $n$ with multirank equal to $m$, denoted by $N_{\mathcal{V}_{t}}(m,n)$, is given by
\begin{align*}
N_{\mathcal{V}_{t}}(m,n)=\sum_{\overrightarrow{\lambda}\in\mathcal{V}_{t},s(\overrightarrow{\lambda})=n\atop r_{7}(\overrightarrow{\lambda})=m}wt(\overrightarrow{\lambda}).
\end{align*}
We also define the weighted count of vector partitions of $n$ with multirank congruent to $k$ modulo $m$ by
\begin{align*}
N_{\mathcal{V}_{t}}(k,m,n)=\sum_{j=-\infty}^{\infty}N_{\mathcal{V}_{t}}(jm+k,n)=
\sum_{\overrightarrow{\lambda}\in\mathcal{V}_{t},s(\overrightarrow{\lambda})=n\atop r_{7}(\overrightarrow{\lambda})\equiv k\pmod{m}}wt(\overrightarrow{\lambda}).
\end{align*}
Thus we have the following generating function for $N_{\mathcal{V}_{t}}(m,n)$:
\begin{align}\label{gf:crank}
\sum_{m=-\infty}^{\infty}\sum_{n=0}^{\infty}N_{\mathcal{V}_{t}}(m,n)z^{m}q^{n}=\frac{(q^{2};q^{2})_{\infty}}{(zq,z^{-1}q,z^{2}q,z^{-2}q;q^{2})_{\infty}
(z^{2}q^{t},z^{-2}q^{t};q^{t})_{\infty}}.
\end{align}

According to the definition of $N_{\mathcal{V}_{t}}(m,n)$, it is nontrivial that $N_{\mathcal{V}_{t}}(m,n)$ is always nonnegative. However, the following corollary of the $q$-binomial theorem proved by Berkovich and Garvan \cite{BG} gives us an affirmative answer.
\begin{proposition}
If $|q|$, $|z|<1$, then
\begin{align*}
\frac{(az;q)_{\infty}}{(a;q)_{\infty}(z;q)_{\infty}}=\frac{1}{(a;q)_{\infty}}+\sum_{n=1}^{\infty}\frac{z^{n}}{(aq^{n};q)_{\infty}(q;q)_{n}}.
\end{align*}
\end{proposition}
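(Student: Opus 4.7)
The plan is to obtain this identity directly from the classical $q$-binomial theorem
$$\sum_{n=0}^{\infty}\frac{(a;q)_{n}}{(q;q)_{n}}z^{n}=\frac{(az;q)_{\infty}}{(z;q)_{\infty}},\qquad |q|,|z|<1,$$
by rescaling the coefficients so that their shape matches the summand required on the right-hand side of the proposition. This is the form in which Heine's theorem is usually stated, and it already supplies the desired product structure almost for free.

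First I divide both sides of the $q$-binomial theorem by $(a;q)_{\infty}$, which is legitimate because $|q|<1$ guarantees $(a;q)_{\infty}\neq 0$. The right-hand side immediately becomes the target quotient $(az;q)_{\infty}/\bigl((a;q)_{\infty}(z;q)_{\infty}\bigr)$. Next I rewrite each coefficient $(a;q)_{n}$ using the elementary telescoping identity
$$(a;q)_{n}=\frac{(a;q)_{\infty}}{(aq^{n};q)_{\infty}},$$
which follows at once from the product definition. Substituting this into the summand converts the $n$-th term on the left into $z^{n}/\bigl((aq^{n};q)_{\infty}(q;q)_{n}\bigr)$, which is exactly the form prescribed by the proposition.

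Finally, I peel off the $n=0$ summand. Since $(q;q)_{0}=1$ and $(aq^{0};q)_{\infty}=(a;q)_{\infty}$, that term contributes precisely $1/(a;q)_{\infty}$, while the remaining tail runs from $n=1$ to $\infty$; together they reproduce the stated identity. The assumption $|q|,|z|<1$ ensures absolute convergence of all series and products involved, so every termwise manipulation is justified. There is no genuine obstacle here: the entire content of the proposition is already contained in the $q$-binomial theorem, and what is labeled as a Proposition is really just a convenient repackaging obtained by a single change of normalization.
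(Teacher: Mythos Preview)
Your proof is correct and aligns with the paper's treatment: the paper does not give its own argument but simply cites this proposition from Berkovich and Garvan as ``a corollary of the $q$-binomial theorem,'' which is precisely the derivation you supply. One minor quibble: the assertion that $|q|<1$ alone guarantees $(a;q)_\infty\neq 0$ is not quite right (it fails if $a=q^{-k}$ for some $k\ge 0$), but this is an implicit hypothesis already built into both sides of the stated identity, so the proof stands.
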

This immediately yields the following corollary.
\begin{corollary}
The coefficients $N_{\mathcal{V}_{t}}(m,n)$ defined in \eqref{gf:crank} are always nonnegative.
\end{corollary}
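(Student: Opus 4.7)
The plan is to apply the proposition just stated in a way that absorbs the single potentially sign-producing factor $(q^{2};q^{2})_{\infty}$ from the numerator of \eqref{gf:crank} into a form with manifestly nonnegative coefficients. The remaining denominator factors are each of the form $1/(1-z^{\pm a}q^{b})$, which expand geometrically with nonnegative coefficients, so they cannot destroy positivity once obtained.

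Concretely, I would specialize the proposition to base $q^{2}$ with the substitutions $a\mapsto z^{-1}q$ and $z\mapsto zq$, so that the product $az$ equals $q^{2}$ and the numerator $(az;q)_{\infty}$ becomes exactly $(q^{2};q^{2})_{\infty}$. This yields
\begin{align*}
\frac{(q^{2};q^{2})_{\infty}}{(z^{-1}q;q^{2})_{\infty}(zq;q^{2})_{\infty}}
=\frac{1}{(z^{-1}q;q^{2})_{\infty}}
+\sum_{n=1}^{\infty}\frac{z^{n}q^{n}}{(z^{-1}q^{2n+1};q^{2})_{\infty}(q^{2};q^{2})_{n}}.
\end{align*}
Every factor on the right-hand side is either the monomial $z^{n}q^{n}$ or a reciprocal of the form $1/(1-z^{-1}q^{2j+1})$ or $1/(1-q^{2k})$, each of which expands as a geometric series with nonnegative integer coefficients. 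Consequently, the coefficient of $z^{m}q^{N}$ on the right is a sum of nonnegative terms for every integer $m$ and every nonnegative integer $N$.

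To finish, I would multiply this identity by the leftover factor
\begin{align*}
\frac{1}{(z^{2}q;q^{2})_{\infty}(z^{-2}q;q^{2})_{\infty}(z^{2}q^{t};q^{t})_{\infty}(z^{-2}q^{t};q^{t})_{\infty}},
\end{align*}
which is itself a product of reciprocals of $1-z^{\pm 2}q^{\cdot}$ terms and hence has nonnegative $z^{m}q^{N}$-coefficients. Since the Cauchy product of two Laurent-in-$z$, power-in-$q$ series with nonnegative coefficients again has nonnegative coefficients, the full series in \eqref{gf:crank} has nonnegative $z^{m}q^{n}$-coefficients, which is precisely the assertion $N_{\mathcal{V}_{t}}(m,n)\geq 0$.

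The only genuinely delicate point is the initial substitution: a careless choice of $a$ and $z$ would leave $(q^{2};q^{2})_{\infty}$ outside the proposition and thereby preserve the alternating signs from the $\mathcal{DE}$-weight. The constraint $az=q^{2}$ essentially forces, up to the $a\leftrightarrow z$ symmetry of the identity, the choice made above; everything after that is bookkeeping with geometric-series expansions.
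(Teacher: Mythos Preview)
Your proposal is correct and takes essentially the same approach as the paper: the paper simply states the Berkovich--Garvan identity and declares that the corollary ``immediately'' follows, while you have written out the specific substitution ($q\mapsto q^{2}$, $a\mapsto z^{-1}q$, $z\mapsto zq$) that absorbs the numerator $(q^{2};q^{2})_{\infty}$ and then observed that the remaining factors are harmless geometric series. This is exactly the intended mechanism, so there is nothing to correct.
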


\begin{theorem}\label{thm:mod 5}
The following relations hold for all $n\geq0$, $j\equiv1\pmod{5}$, $k\equiv4\pmod{5}$ and $l\equiv0\pmod{5}$.
\begin{align}
N_{\mathcal{V}_{j}}(0,5,5n+4)=N_{\mathcal{V}_{j}}(1,5,5n+4)=\cdots=N_{\mathcal{V}_{j}}(4,5,5n+4) &=\frac{w_{j}(5n+4)}{5},\label{eq1:mod5}\\
N_{\mathcal{V}_{k}}(0,5,5n+3)=N_{\mathcal{V}_{k}}(1,5,5n+3)=\cdots=N_{\mathcal{V}_{k}}(4,5,5n+3) &=\frac{w_{k}(5n+3)}{5},\label{eq1:mod5(2)}\\
N_{\mathcal{V}_{l}}(0,5,5n+3)=N_{\mathcal{V}_{l}}(1,5,5n+3)=\cdots=N_{\mathcal{V}_{l}}(4,5,5n+3) &=\frac{w_{l}(5n+3)}{5},\label{eq1:mod5(3)}\\
N_{\mathcal{V}_{l}}(0,5,5n+4)=N_{\mathcal{V}_{l}}(1,5,5n+4)=\cdots=N_{\mathcal{V}_{l}}(4,5,5n+4) &=\frac{w_{l}(5n+4)}{5}.\label{eq1:mod5(4)}
\end{align}
\end{theorem}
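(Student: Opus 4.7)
The plan is to encode the asserted equidistribution as a vanishing statement for the crank generating function \eqref{gf:crank} at fifth roots of unity. Let $\zeta=e^{2\pi i/5}$ and denote \eqref{gf:crank} by $G(z,q)$. A standard Fourier argument in $z$ shows that, for example, the four equalities in \eqref{eq1:mod5} are equivalent to $[q^{5n+4}]G(\zeta^{j},q)=0$ in $\mathbb{Z}[\zeta]$ for $j=1,2,3,4$; because $r_{7}$ is negated by the involution swapping the three pairs of components, $G(z,q)=G(z^{-1},q)$, so only $j=1,2$ need to be handled. The identity $\prod_{j=1}^{4}(\zeta^{j}q;q^{2})_{\infty}=(q^{5};q^{10})_{\infty}/(q;q^{2})_{\infty}$ collapses the first factor of the denominator, yielding
\[
G(\zeta,q)=\frac{(q;q)_{\infty}}{(q^{5};q^{10})_{\infty}(\zeta^{2}q^{t},\zeta^{-2}q^{t};q^{t})_{\infty}},
\]
and an analogous formula for $G(\zeta^{2},q)$ with $(\zeta q^{t},\zeta^{-1}q^{t};q^{t})_{\infty}$ replacing the last factor.

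When $t\equiv 0\pmod 5$, the Pochhammer factor involving $q^{t}$ is itself a power series in $q^{5}$, so $G(\zeta,q)$ equals $(q;q)_{\infty}$ times a power series in $q^{5}$; by Euler's pentagonal number theorem \eqref{EPNT} the powers of $q$ in $(q;q)_{\infty}$ have residues in $\{0,1,2\}$ modulo $5$, and multiplying by a series in $q^{5}$ preserves this. Hence $G(\zeta,q)$ (and likewise $G(\zeta^{2},q)$) has no $q^{5n+3}$ or $q^{5n+4}$ terms, which establishes \eqref{eq1:mod5(3)}--\eqref{eq1:mod5(4)}. For $t\equiv 1,4\pmod 5$ I would multiply numerator and denominator by the missing primitive-root Pochhammer factor and invoke the Jacobi triple product in the form $(\zeta,\zeta^{-1}q^{t},q^{t};q^{t})_{\infty}=\sum_{n}(-\zeta)^{n}q^{tn(n-1)/2}$ to rewrite
\[
G(\zeta,q)=\frac{(q;q)_{\infty}}{(1-\zeta)(q^{5};q^{10})_{\infty}(q^{5t};q^{5t})_{\infty}}\sum_{n=-\infty}^{\infty}(-\zeta)^{n}q^{tn(n-1)/2},
\]
with a parallel formula for $G(\zeta^{2},q)$ carrying $(-\zeta^{2})^{n}$. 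The prefactor is a scalar times a power series in $q^{5}$, so the vanishing reduces to controlling the $q$-residues modulo $5$ of $(q;q)_{\infty}$ times the theta sum.

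The decisive ingredient is
\[
U_{3}^{(t)}(q):=\sum_{n\equiv 3\pmod 5}(-1)^{n}q^{tn(n-1)/2}=0,
\]
proved by the involution $n\mapsto 1-n$, which preserves the residue class modulo $5$ and the exponent $tn(n-1)/2$, flips the sign $(-1)^{n}$, and has no integer fixed point. Splitting the theta sum as $\sum_{n}(-\zeta)^{n}q^{tn(n-1)/2}=\sum_{r=0}^{4}\zeta^{r}U_{r}^{(t)}(q)$, the $r=3$ component drops out, and a direct check of $tn(n-1)/2\pmod 5$ for $r\in\{0,1,2,4\}$ shows the remaining $q$-exponents lie in $\{0,1\}\pmod 5$ when $t\equiv 1\pmod 5$ and in $\{0,4\}\pmod 5$ when $t\equiv 4\pmod 5$. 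Convolving with the residues $\{0,1,2\}$ supplied by $(q;q)_{\infty}$ gives $\{0,1,2,3\}$ and $\{0,1,2,4\}$ respectively, each of which misses the bad residue required by \eqref{eq1:mod5} and \eqref{eq1:mod5(2)}. The analysis for $G(\zeta^{2},q)$ is identical after the relabeling $\zeta^{r}\mapsto\zeta^{2r}$, since $U_{3}^{(t)}=0$ again kills the only dangerous coefficient. The main obstacle is spotting this involution identity; once it is in place, the rest is residue bookkeeping with Euler's pentagonal number theorem and the Jacobi triple product.
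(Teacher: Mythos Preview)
Your argument is correct and follows essentially the same route as the paper's proof: specialize \eqref{gf:crank} at a primitive fifth root of unity, collapse the odd-part denominator via $\prod_{j=1}^{4}(1-\zeta^{j}x)=(1-x^{5})/(1-x)$, complete the $q^{t}$-factor to $(q^{5t};q^{5t})_{\infty}$, and combine Euler's pentagonal number theorem with the Jacobi triple product to isolate the single ``bad'' residue class, which then vanishes. The paper packages the last step with the modified Jacobi triple product \eqref{MJTP}, where the cancellation appears as $1-\zeta_{5}^{2n+1}=0$ for $n\equiv 2\pmod 5$; your sign-reversing involution $n\mapsto 1-n$ on the class $n\equiv 3\pmod 5$ in the unfolded bilateral sum is exactly the same cancellation in different clothing (the paper's index $n$ and yours are related by folding the bilateral sum at $n\leftrightarrow 1-n$). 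One small redundancy: since the $N_{\mathcal{V}_{t}}(i,5,\cdot)$ are integers and the minimal polynomial of $\zeta$ is $1+\zeta+\cdots+\zeta^{4}$, the single vanishing $[q^{5n+4}]G(\zeta,q)=0$ already forces all five residue counts to coincide, so your separate treatment of $G(\zeta^{2},q)$ is unnecessary.
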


To prove Theorem \ref{thm:mod 5}, the main ingredient is the following modified Jacobi's triple product identity \cite{Gar}:
\begin{align}\label{MJTP}
\prod_{n=1}^{\infty}(1-q^{n})(1-zq^{n})(1-z^{-1}q^{n})=\sum_{n=0}^{\infty}(-1)^{n}q^{n(n+1)/2}z^{-n}\left(\frac{1-z^{2n+1}}{1-z}\right).
\end{align}
\begin{proof}[Proof of Theorem \ref{thm:mod 5}]
We will present the proof of the case $j\equiv1\pmod{5}$ to illustrate the main idea. The proofs of the remaining cases are similar.

Putting $z=\zeta_{5}=e^{2\pi i/5}$ and $t=j=5s+1$ in \eqref{gf:crank}, we see that
\begin{align*}
&\sum_{m=-\infty}^{\infty}\sum_{n=0}^{\infty}N_{\mathcal{V}_{j}}(m,n)\zeta_{5}^{m}q^{n}\\ &\quad=\sum_{n=0}^{\infty}\sum_{i=0}^{4}N_{\mathcal{V}_{j}}(i,5,n)\zeta_{5}^{i}q^{n}\\
&\quad=\frac{(q^{2};q^{2})_{\infty}}{(\zeta_{5}q,\zeta_{5}^{-1}q,\zeta_{5}^{2}q,\zeta_{5}^{-2}q;q^{2})_{\infty}(\zeta_{5}^{2}q^{5s+1},\zeta_{5}^{-2}q^{5s+1};
 q^{5s+1})_{\infty}}\\
&\quad=\frac{(q;q)_{\infty}(\zeta_{5}q^{5s+1},\zeta_{5}^{-1}q^{5s+1},q^{5s+1};q^{5s+1})_{\infty}}{(q^{5};q^{10})_{\infty}(q^{5(5s+1)};q^{5(5s+1)})_{\infty}}\\
&\quad=\frac{\sum_{m=-\infty}^{\infty}\sum_{n=0}^{\infty}(-1)^{m+n}q^{m(3m-1)/2+n(n+1)(5s+1)/2}\zeta_{5}^{-n}\left(1-\zeta_{5}^{2n+1}\right)}
 {(q^{5};q^{10})_{\infty}(q^{5(5s+1)};q^{5(5s+1)})_{\infty}(1-\zeta_{5})}.
\end{align*}
Here the last equality relies on \eqref{EPNT} and \eqref{MJTP}. Since $m(3m-1)/2\equiv0,1,2\pmod{5}$ and $n(n+1)(5s+1)/2\equiv0,1,3\pmod{5}$, it follows that ${m(3m-1)/2+n(n+1)(5s+1)/2}$ is congruent to 4 modulo 5 exactly when $m\equiv1\pmod{5}$ and $n\equiv2\pmod{5}$. This means that the coefficient of $q^{5n+4}$ in
\begin{align*}
(-1)^{m+n}q^{m(3m-1)/2+n(n+1)(5s+1)/2}\zeta_{5}^{-n}\left(1-\zeta_{5}^{2n+1}\right)
\end{align*}
is zero since $\zeta_{5}^{-n}\left(1-\zeta_{5}^{2n+1}\right)=0$ when $n\equiv2\pmod{5}$. Thus
\begin{align}
\sum_{i=0}^{4}N_{\mathcal{V}_{j}}(i,5,5n+4)\zeta_{5}^{i}=0.\label{crank eq}
\end{align}
We note that the left hand side of \eqref{crank eq} is a polynomial in $\zeta_{5}$ over $\mathbb{Z}$. It follows that $N_{\mathcal{V}_{j}}(i,5,5n+4)$ has the same value for all $0\leq i\leq4$, since the minimal polynomial for $\zeta_{5}$ over $\mathbb{Q}$ is
\begin{align*}
1+\zeta_{5}+\zeta_{5}^{2}+\zeta_{5}^{3}+\zeta_{5}^{4}.
\end{align*}
We therefore establish \eqref{eq1:mod5} for $j\equiv1\pmod{5}$. This ends our proof.
\end{proof}

\begin{table}[ht]\caption{Multiank for 7-colored partitions $w_{4}(3)$}\label{rank}
\centering
\begin{tabular}{|c|c||c|c|}
$\overrightarrow{\pi}$ &$(wt(\overrightarrow{\pi}),r_{7}(\overrightarrow{\pi}))$ &$\overrightarrow{\pi}$ &$(wt(\overrightarrow{\pi}),r_{7}(\overrightarrow{\pi}))$\\
$3_{2}$ &$(1,1)$ &$1_{2}+1_{2}+1_{5}$ &$(1,0)$\\
$3_{3}$ &$(1,-1)$ &$1_{2}+1_{3}+1_{3}$ &$(1,-1)$ \\
$3_{4}$ &$(1,2)$ &$1_{3}+1_{3}+1_{4}$ &$(1,0)$ \\
$3_{5}$ &$(1,-2)$ &$1_{3}+1_{3}+1_{5}$ &$(1,-4)$\\
$2_{1}+1_{2}$ &$(-1,1)$  &$1_{2}+1_{4}+1_{4}$ &$(1,5)$\\
$2_{1}+1_{3}$ &$(-1,-1)$ &$1_{3}+1_{4}+1_{4}$ &$(1,3)$\\
$2_{1}+1_{4}$ &$(-1,2)$  &$1_{4}+1_{4}+1_{5}$ &$(1,2)$\\
$2_{1}+1_{5}$ &$(-1,-2)$  &$1_{2}+1_{5}+1_{5}$ &$(1,-3)$\\
$1_{2}+1_{2}+1_{2}$ &$(1,3)$ &$1_{3}+1_{5}+1_{5}$ &$(1,-5)$\\
$1_{3}+1_{3}+1_{3}$ &$(1,-3)$ &$1_{4}+1_{5}+1_{5}$ &$(1,-2)$\\
$1_{4}+1_{4}+1_{4}$ &$(1,6)$ &$1_{2}+1_{3}+1_{4}$ &$(1,2)$\\
$1_{5}+1_{5}+1_{5}$ &$(1,-6)$ &$1_{2}+1_{3}+1_{5}$ &$(1,-2)$\\
$1_{2}+1_{2}+1_{3}$ &$(1,1)$ &$1_{2}+1_{4}+1_{5}$ &$(1,1)$\\
$1_{2}+1_{2}+1_{4}$ &$(1,4)$ &$1_{3}+1_{4}+1_{5}$ &$(1,-1)$\\
\end{tabular}
\end{table}

\begin{example}
In Table~\ref{rank}, we list the multirank $r_{7}$ for the total 28 weighted 7-colored partitions of 3. Note that the weighted count of multirank divides $w_{4}(3)=20$ into five residue classes, each with equal size 4.
\end{example}

\subsection{A vector crank}
For a given partition $\lambda$, the crank $c(\lambda)$ of $\lambda$ is given by \cite{AG1988}:
\begin{align*}
c(\lambda):=\begin{cases}
\lambda_{1},\, &\textrm{if}~n_{1}(\lambda)=0;\cr \mu(\lambda)-n_{1}(\lambda),\, &\textrm{if}~n_{1}(\lambda)>0,\end{cases}
\end{align*}
where $n_{1}(\lambda)$ is the number of 1's in $\lambda$, $\lambda_{1}$ is the largest part in $\lambda$, and $\mu(\lambda)$ is the number of parts larger than $n_{1}(\lambda)$. By extending the partition set $\mathcal{P}$ to a new set $\mathcal{P}^{*}$, in which two additional copies of the partition 1 (say $1^{*}$ and $1^{**}$) are added, B. Kim \cite{Kim2012, Kim2011} obtained
\begin{align*}
\frac{\f{1}}{(zq,z^{-1}q;q)_{\infty}}=\sum_{\lambda\in\mathcal{P}}wt^*(\lambda)z^{c^*(\lambda)}q^{\sigma^*(\lambda)},
\end{align*}
where $wt^*(\lambda)$, $c^*(\lambda)$ and $\sigma^*(\lambda)$ are defined as follows:
\begin{align*}
wt^*(\lambda) &:=\begin{cases}
1,\, &\textrm{if}~\lambda\in\mathcal{P},\lambda=1^*~\textrm{or}~\lambda=1^{**};\cr -1,\, &\textrm{if}~\lambda=1,\end{cases}\\
c^*(\lambda) &:=\begin{cases}
c(\lambda),\, &\textrm{if}~\lambda\in\mathcal{P};\cr 0,\, &\textrm{if}~\lambda=1;\cr1,\,&\textrm{if}~\lambda=1^*;\cr-1,\,&\textrm{if}~\lambda=1^{**},\end{cases}\\
\sigma^*(\lambda) &:=\begin{cases}
\sigma(\lambda),\, &\textrm{if}~\lambda\in\mathcal{P};\cr 1,\, &\textrm{otherwise}.\end{cases}
\end{align*}

Let
\begin{align*}
\mathcal{W}_{2}=\left\{(\lambda_{1},\lambda_{2},\lambda_{3},\lambda_{4},\lambda_{5},2\lambda_{6},2\lambda_{7})|\lambda_{1}\in\mathcal{DE},\lambda_{2},\lambda_{3},
\lambda_{4},\lambda_{5}\in\mathcal{DO},\lambda_{6},\lambda_{7}\in\mathcal{P}^{*}\right\}.
\end{align*}
For a vector partition $\overrightarrow{\lambda}\in\mathcal{W}_{2}$, we define the sum of parts function $\widetilde{s}$, the weight function $\widetilde{wt}$ and the vector crank $c_{7}$ by
\begin{align*}
\widetilde{s}(\overrightarrow{\lambda}) &=\sigma(\lambda_{1})+\sigma(\lambda_{2})+\sigma(\lambda_{3})+\sigma(\lambda_{4})+\sigma(\lambda_{5})+2\sigma^*(\lambda_{6})+2\sigma^*(\lambda_{7}),\\
\widetilde{wt}(\overrightarrow{\lambda}) &=(-1)^{\ell(\lambda_{1})}wt^*(\lambda_{6})wt^*(\lambda_{7}),\\
c_{7}(\overrightarrow{\lambda}) &=\ell(\lambda_{2})-\ell(\lambda_{3})+2\left(\ell(\lambda_{4})-\ell(\lambda_{5})\right)+c^*(\lambda_{6})+2c^*(\lambda_{7}).
\end{align*}

Let $M^*(m,n)$ denote the number of weighted 7-colored partitions in $\mathcal{W}_{2}(n)$ with vector crank equal to $m$, and $M^*(k,m,n)$ denote the number of partitions in $\mathcal{W}_{2}(n)$ with vector crank congruent to $k$ modulo $m$. We have
\begin{align}
\sum_{m=-\infty}^{\infty}\sum_{n=0}^{\infty}M^*(m,n)z^{m}q^{n}=\frac{\f{2}^{3}}{(zq,z^{-1}q,z^{2}q,z^{-2}q;q)_{\infty}}.\label{crank gf}
\end{align}

\begin{theorem}
The following relation holds for all $n\geq0$,
\begin{align*}
M^*(0,7,7n+4)=M^*(1,7,7n+4)=\cdots=M^*(6,7,7n+4)=\frac{w_{2}(7n+4)}{7}.
\end{align*}
\end{theorem}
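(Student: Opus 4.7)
The plan is to mimic the proof of Theorem~\ref{thm:mod 5}, this time substituting the primitive seventh root of unity $\zeta_7 = e^{2\pi i/7}$ into the generating function~\eqref{crank gf}. Using the identity $\prod_{k=1}^{6}(1-\zeta_7^k q^n) = (1-q^{7n})/(1-q^n)$, I would multiply numerator and denominator by the missing cyclotomic factors $(\zeta_7^3 q,\zeta_7^{-3}q;q)_\infty$ to rewrite the right-hand side of~\eqref{crank gf} at $z=\zeta_7$ as
$$\sum_{i=0}^{6}\sum_{n=0}^{\infty} M^*(i,7,n)\,\zeta_7^{i}q^{n} = \frac{f_2^3\, f_1\, (\zeta_7^3 q,\zeta_7^{-3}q;q)_\infty}{f_7}.$$

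Next, I would expand the two products on the right. Jacobi's identity~\eqref{JI} gives $f_2^3=\sum_{m\ge 0}(-1)^m(2m+1)q^{m(m+1)}$, while the modified Jacobi triple product~\eqref{MJTP} with $z\mapsto\zeta_7^{3}$ yields
$$f_1(\zeta_7^3 q,\zeta_7^{-3}q;q)_\infty = \sum_{n\ge 0}(-1)^{n}q^{n(n+1)/2}\,\zeta_7^{-3n}\,\frac{1-\zeta_7^{3(2n+1)}}{1-\zeta_7^3}.$$
The coefficient on the right vanishes exactly when $7\mid 2n+1$, i.e., when $n\equiv 3\pmod 7$, so only the residues $n\not\equiv 3\pmod 7$ survive in the series.

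The pivotal step is then a short residue census modulo $7$, parallel to the proof of Theorem~\ref{Thm:mod 7}: the surviving exponents $n(n+1)/2$ lie in $\{0,1,3\}\pmod 7$, while the exponents $m(m+1)$ of $f_2^3$ lie in $\{0,2,5,6\}\pmod 7$, and a quick inspection of the twelve pairwise sums shows that the residue $4$ is never attained. Since $1/f_7$ is supported on powers of $q^7$, the coefficient of $q^{7n+4}$ on the right-hand side vanishes, and so
$$\sum_{i=0}^{6} M^*(i,7,7n+4)\,\zeta_7^{i}=0.$$

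Finally, since $1+\zeta_7+\zeta_7^2+\cdots+\zeta_7^6$ is the minimal polynomial of $\zeta_7$ over $\mathbb{Q}$ and the $M^*(i,7,7n+4)$ are integers, this identity forces all seven of them to coincide; setting $z=1$ in~\eqref{crank gf} identifies their common sum as $w_2(7n+4)$, so each equals $w_2(7n+4)/7$. The only delicate ingredient is the residue census, but the numerology cooperates precisely as in the elementary proof of Theorem~\ref{Thm:mod 7}, so no genuine obstacle arises.
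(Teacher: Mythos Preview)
Your proposal is correct and follows essentially the same route as the paper: substitute $z=\zeta_7$ in~\eqref{crank gf}, complete the cyclotomic product in the denominator to reduce to $f_2^3\,f_1(\zeta_7^3 q,\zeta_7^{-3}q;q)_\infty/f_7$, expand via~\eqref{JI} and~\eqref{MJTP}, and observe that the exponent $m(m+1)+n(n+1)/2$ hits $4\pmod 7$ only when $n\equiv 3\pmod 7$, where the factor $1-\zeta_7^{3(2n+1)}$ kills the term. The paper omits the residue census you spell out, merely pointing back to Theorem~\ref{thm:mod 5}; your version is a faithful and slightly more explicit rendering of the same argument.
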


\begin{proof}
Setting $z=\zeta_{7}=e^{2\pi i/7}$ in \eqref{crank gf}, we have
\begin{align*}
&\sum_{m=-\infty}^{\infty}\sum_{n=0}^{\infty}M^*(m,n)\zeta_{7}^{m}q^{n}\\
&\quad=\frac{\f{2}^{3}}{(\zeta_{7}q,\zeta_{7}^{-1}q,\zeta_{7}^{2}q,\zeta_{7}^{-2}q;q)_{\infty}}\\
 &\quad=\frac{\f{2}^{3}(\zeta_{7}^{3}q,\zeta_{7}^{-3}q,q;q)}{(\zeta_{7}q,\zeta_{7}^{-1}q,\zeta_{7}^{2}q,\zeta_{7}^{-2}q,\zeta_{7}^{3}q,\zeta_{7}^{-3}q,q;q)_{\infty}}\\
 &\quad=\frac{\sum_{m=0}^{\infty}\sum_{n=-\infty}^{\infty}(-1)^{m+n}(2m+1)q^{m(m+1)+n(n+1)/2}\left(1-(\zeta_{7}^{3})^{2n+1}\right)}{\f{7}(1-\zeta_{7}^3)\zeta_{7}^{3n}}.
\end{align*}
Here the last equality follows from \eqref{JI} and \eqref{MJTP} again. The rest of proof is similar to our proof of Theorem \ref{thm:mod 5}. We therefore omit the details here.
\end{proof}

\section{Partitions weighted by the parity of multirank and vector crank}\label{weighted part sta}

In \cite{CKL}, Choi, Kang, and Lovejoy studied arithmetic properties of the ordinary partition function weighted by the parity of the crank. Later, Kim \cite{Kim2015} also studied arithmetic properties of cubic partition pairs weighted by the parity of the crank analog. As an analog of their work, we consider the number of weighted 7-colored partitions $w_{t}(n)$ weighted by the parity of the multirank and vector crank, i.e.,
\begin{align*}
\sum_{n=0}^{\infty}c_{t}(n)q^{n} &:=\sum_{n=0}^{\infty}\left(\sum_{m=-\infty}^{\infty}(-1)^{m}N_{\mathcal{V}_{t}}(m,n)\right)q^{n}
 =\frac{\f{2}}{(q^{2};q^{4})_{\infty}^{2}\f{t}^{2}},\\
\sum_{n=0}^{\infty}d(n)q^{n} &:=\sum_{n=0}^{\infty}\left(\sum_{m=-\infty}^{\infty}(-1)^{m}M^{*}(m,n)\right)q^{n}=\f{2},
\end{align*}

Interestingly, $c_{t}(n)$ satisfies the following equalities and congruences.
\begin{theorem}\label{weighted analog thm }
For all $n\geq0$,
\begin{align}
&c_{t}(5n+3)= c_{t}(5n+4)= 0,\quad \emph{if } t\equiv 0\pmod{5}, \label{ana mod 5(1)}\\
&c_{t}(5n+4)\equiv 0\pmod{5},\quad \emph{if } t\equiv 1\pmod{5}, \label{ana mod 5(2)}\\
&c_{t}(5n+3)\equiv 0\pmod{5},\quad \emph{if } t\equiv 4\pmod{5}. \label{ana mod 5(3)}
\end{align}
\end{theorem}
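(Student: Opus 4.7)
The plan is to exploit a dramatic collapse of the generating function for $c_t(n)$ and then run essentially the same residue-class analysis as in the proof of Theorem \ref{Thm:mod 5}, but with a much simpler input. Starting from the stated generating function and using $(q^2;q^4)_\infty = f_2/f_4$, I would immediately reduce
$$\sum_{n=0}^\infty c_t(n) q^n = \frac{f_2}{(q^2;q^4)_\infty^2 f_t^2} = \frac{f_4^2}{f_2 f_t^2} = \frac{\psi(q^2)}{f_t^2},$$
where the last equality uses \eqref{psi function}. The driving observation is that $\psi(q^2) = \sum_{n \ge 0} q^{n(n+1)}$, and a direct check on a full residue system shows that $n(n+1) \pmod 5$ takes values only in $\{0,1,2\}$.

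For \eqref{ana mod 5(1)}, when $5 \mid t$, the factor $1/f_t^2$ is a formal power series in $q^t$ whose exponents all lie in $5\mathbb{Z}$. Multiplying by $\psi(q^2)$ can therefore only produce exponents in $\{0,1,2\} \pmod 5$, giving the \emph{equalities} $c_t(5n+3) = c_t(5n+4) = 0$ rather than just congruences.

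For \eqref{ana mod 5(2)} and \eqref{ana mod 5(3)} I would pass to modulo $5$ via the Frobenius-type congruence $f_t^5 \equiv f_{5t} \pmod 5$, which gives
$$\sum_{n=0}^\infty c_t(n) q^n \equiv \frac{\psi(q^2)\, f_t^3}{f_{5t}} \pmod 5.$$
Expanding $f_t^3$ by Jacobi's identity \eqref{JI} yields $f_t^3 = \sum_{n \ge 0} (-1)^n (2n+1) q^{tn(n+1)/2}$. The triangular number $n(n+1)/2$ hits the residue $3$ modulo $5$ precisely when $n \equiv 2 \pmod 5$; for such $n$, however, the coefficient $2n+1$ is divisible by $5$. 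Hence modulo $5$, $f_t^3$ is supported on exponents $\equiv 0$ or $t$ modulo $5$.

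A final residue count closes the argument. Since $1/f_{5t}$ contributes only exponents divisible by $5$, for $t \equiv 1 \pmod 5$ the supports add to $\{0,1,2\} + \{0,1\} + \{0\} = \{0,1,2,3\} \pmod 5$, missing $4$; for $t \equiv 4 \pmod 5$ they add to $\{0,1,2\} + \{0,4\} + \{0\} = \{0,1,2,4\} \pmod 5$, missing $3$. These are precisely \eqref{ana mod 5(2)} and \eqref{ana mod 5(3)}. The only real ingenuity required is spotting the simplification to $\psi(q^2)/f_t^2$; once that is in place, the remainder is lighter residue bookkeeping than in Theorem \ref{Thm:mod 5}, so I do not anticipate an obstacle.
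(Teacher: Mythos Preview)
Your proof is correct and matches the paper's approach essentially line for line: both simplify the generating function to $\psi(q^2)/f_t^2=f_4^2/(f_2 f_t^2)$, dispose of the case $5\mid t$ via the support $\{0,1,2\}$ of $n(n+1)\pmod 5$, and for $t\equiv 1,4\pmod 5$ reduce modulo $5$ to $\psi(q^2)f_t^3/f_{5t}$ and finish with the same residue analysis of Jacobi's identity (the paper phrases the $\psi$-support as $P_0+P_1+P_3$ for $\psi(q)$, which is your $\{0,1,2\}$ for $\psi(q^2)$ after doubling).
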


\begin{proof}
Notice that, when $t\equiv 0\pmod{5}$,
$$\sum_{n=0}^{\infty}c_{t}(n)q^{n} =\frac{\f{2}}{(q^{2};q^{4})_{\infty}^{2}\f{t}^{2}}=\frac{\f{4}^2}{\f{2}\f{t}^2}=\frac{1}{\f{t}^2}\sum_{n=0}^\infty q^{n(n+1)},$$
where we use \eqref{psi function}. Since $n(n+1)$ is not congruent to $3$ and $4$ modulo $5$, \eqref{ana mod 5(1)} follows immediately.

When $t\not\equiv 0 \pmod{5}$, we have, modulo $5$,
\begin{align*}
\sum_{n=0}^{\infty}c_{t}(n)q^{n} =\frac{\f{2}}{(q^{2};q^{4})_{\infty}^{2}\f{t}^{2}}\equiv\frac{\f{4}^{2}\f{t}^{3}}{\f{2}\f{5t}}.
\end{align*}
It follows from \eqref{psi function} that
$$\frac{\f{2}^2}{\f{1}}=P_0+P_1+P_3,$$
where $P_i$ consists of those terms in which the power of $q$ is $i$ modulo $5$. Similar to the proofs of \eqref{colored part mod 5(2)} and \eqref{colored part mod 5(3)}, one can easily obtain \eqref{ana mod 5(2)} and \eqref{ana mod 5(3)}. This finishes the proof.
\end{proof}

\begin{remark}
We first note that \eqref{ana mod 5(1)}--\eqref{ana mod 5(3)} are interesting as it appears to be very rare that the original partition function $w_t(n)$ and its weighted versions (which are weighted by the parity of its partition statistics) satisfy the same Ramanujan-type congruences. (Here \eqref{ana mod 5(1)} can be viewed as mod $5$ congruences like \eqref{colored part mod 5(1)}.)

On the other hand, one readily sees that $c_{4}(n)=p\left(\frac{n}{2}\right)$ where $p(n)$ is the number of partitions of $n$ and $p(x)=0$ if $x$ is not an integer. From Ramanujan's congruences for $p(n)$ modulo powers of 5, it is easy to get $c_{4}(5^{\alpha}n+\lambda_{\alpha})\equiv0\pmod{5^{\alpha}}$, where $\lambda_{\alpha}$ is the least positive reciprocal of 12 modulo $5^{\alpha}$. Interestingly, this family of congruences resembles Sellers' result \cite{Sel} for $c\phi_{2}(n)$:
\begin{theorem}[Corollary 2.12, \cite{PR}]\label{Sellers thm}
For all $n\geq0$ and $\alpha\geq1$,
\begin{align*}
c\phi_{2}(5^{\alpha}n+\lambda_{\alpha})\equiv0\pmod{5^{\alpha}},
\end{align*}
where $\lambda_{\alpha}$ is the least positive reciprocal of 12 modulo $5^{\alpha}$.
\end{theorem}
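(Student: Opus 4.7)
The result is the natural analog, for $c\phi_{2}(n)$, of Ramanujan's classical congruences $p(5^{\alpha}n+\delta_{\alpha})\equiv 0\pmod{5^{\alpha}}$, where $\delta_{\alpha}$ is the least positive inverse of $24$ modulo $5^{\alpha}$. Since $\lambda_{\alpha}\equiv 2\delta_{\alpha}\pmod{5^{\alpha}}$, the exponents line up the way one would expect from a ``$q\mapsto q^{2}$'' shift, and my plan is to replay the iterative Ramanujan--Watson argument in this setting.

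First, I would rewrite the generating function in a more symmetric form. Combining the $t=4$ case of \eqref{7 colored gf} with the identity $\varphi(q)=f_{2}^{5}/(f_{1}^{2}f_{4}^{2})$ already recorded in this paper, one obtains
$$\sum_{n\geq 0} c\phi_{2}(n)\,q^{n}=\frac{f_{2}^{5}}{f_{1}^{4}f_{4}^{2}}=\frac{\varphi(q)}{f_{1}^{2}}.$$
This form is convenient since both the classical $5$-dissection of $\varphi(q)$ and the $5$-dissection of $1/f_{1}^{2}$ (a corollary of Ramanujan's famous $5$-dissection of $1/f_{1}$) are explicit. Extracting the $q^{5n+3}$ terms from this dissection yields the base case $\alpha=1$ in the sharp form
$$\sum_{n\geq 0} c\phi_{2}(5n+3)\,q^{n}=5\,H_{1}(q),$$
where $H_{1}(q)$ is an explicit eta quotient with integer coefficients; this recovers Andrews' congruence \eqref{F-partition cong}.

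For the induction on $\alpha$, set $L_{\alpha}(q):=\sum_{n\geq 0} c\phi_{2}(5^{\alpha}n+\lambda_{\alpha})q^{n}$. The plan is to exhibit a finite-dimensional $\mathbb{Z}$-module $\mathcal{M}$, spanned by a fixed collection of eta quotients, with two properties: (i) $L_{1}(q)\in 5\mathcal{M}$, i.e.\ $H_{1}\in\mathcal{M}$; and (ii) passing from $L_{\alpha}$ to $L_{\alpha+1}$ amounts to applying a $U_{5}$-type operator on a specific residue class followed by multiplication by a fixed eta quotient, and this composite sends $\mathcal{M}$ into $5\mathcal{M}$. An immediate induction on $\alpha$ then gives $L_{\alpha}(q)\in 5^{\alpha}\mathcal{M}$, which is the desired divisibility.

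The main obstacle is constructing $\mathcal{M}$ and verifying both its stability under this $U_{5}$-type operation and the crucial extra factor of $5$ in the output. For the plain partition function $p(n)$ the analogous step was carried out by Watson via explicit $q$-series manipulations built on Ramanujan's $5$-dissections, and an adaptation of that style of argument should succeed here once the correct basis of eta quotients is identified. The cleaner algorithmic route---and the one followed by Paule and Radu \cite{PR}---is to clear a fractional power of $q$ so that each $L_{\alpha}(q)$ becomes a modular function on a congruence subgroup (essentially $\Gamma_{0}(20)$), and then to invoke the Ramanujan--Kolberg framework, which reduces the entire infinite family of congruences to checking finitely many initial coefficients. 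Either way, the real work lies in pinning down the ambient module; the induction itself is then mechanical.
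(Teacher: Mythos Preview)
The paper does not prove this statement at all; it appears inside a remark in Section~\ref{weighted part sta} and is simply quoted from Paule and Radu \cite{PR} (where it settles a conjecture of Sellers \cite{Sel}). There is therefore no in-paper argument to compare your proposal against. For what it is worth, your outline---rewriting the generating function as $\varphi(q)/f_{1}^{2}$, extracting the base case via $5$-dissection, and then running a Watson/Ramanujan--Kolberg induction through a $U_{5}$-stable module of eta quotients---is exactly the strategy implemented in \cite{PR}, so the sketch is on the right track; but the substantive work (identifying the module and certifying the extra factor of $5$ at each iteration) is precisely what you defer to that reference.
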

\end{remark}

Relying on the combinatorial version of Euler's pentagonal number theorem \cite[p. 10, Theorem 1.6]{And1}, we obtain the following interesting corollary.
\begin{corollary}
Let $M_{e}^{*}(n)$ (resp. $M_{o}^{*}(n)$) denote the number of weighted 7-colored partitions counted by $w_{2}(n)$ with even (resp. odd) vector crank. Then
\begin{align}\label{comb inter EPNT}
d(n)=M_{e}^{*}(n)-M_{o}^{*}(n)=\begin{cases}
(-1)^{m},\, &\textrm{if}~n=m(3m\pm1);\cr 0,\, &\textrm{otherwise}.\end{cases}
\end{align}
\end{corollary}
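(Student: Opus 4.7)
The plan is to combine two simple observations with Euler's pentagonal number theorem. First, the left-hand equality $d(n) = M_e^*(n) - M_o^*(n)$ in \eqref{comb inter EPNT} is immediate from the definition of $d(n)$: the factor $(-1)^m$ equals $+1$ when $m$ is even and $-1$ when $m$ is odd, so
$$
\sum_{m=-\infty}^{\infty}(-1)^{m}M^{*}(m,n) \;=\; \sum_{m \text{ even}}M^{*}(m,n)\;-\;\sum_{m \text{ odd}}M^{*}(m,n)\;=\; M_e^*(n) - M_o^*(n).
$$
No combinatorics is needed for this step.

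Second, the generating-function identity $\sum_{n\geq 0} d(n)q^n = f_2 = \f{2}$ has in fact already been recorded in the definition of $d(n)$ just above the corollary. I would recall where this comes from: specializing $z = -1$ in the vector-crank generating function \eqref{crank gf} collapses the denominator via the standard simplification $(-q;q)_\infty = f_2/f_1$, leaving $f_2^3/f_2^2 = f_2$. Hence the corollary reduces to identifying the coefficient of $q^n$ in $f_2$.

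Third, I would invoke Euler's pentagonal number theorem \eqref{EPNT} in the form $\f{1} = \sum_{m=-\infty}^{\infty} (-1)^m q^{m(3m-1)/2}$ and substitute $q \mapsto q^2$ to obtain
$$
f_2 \;=\; \sum_{m=-\infty}^{\infty} (-1)^m q^{m(3m-1)}.
$$
Splitting the sum into $m \geq 0$ and $m < 0$, and replacing $m$ by $-m$ in the negative range (so that the exponent becomes $m(3m+1)$ while the sign $(-1)^m$ is preserved), produces exactly the right-hand side of \eqref{comb inter EPNT}: the coefficient is $(-1)^m$ at $n = m(3m\pm 1)$ and $0$ otherwise.

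The reason the statement cites the \emph{combinatorial} version of EPNT (Andrews \cite[p.~10, Theorem 1.6]{And1}) is that this coefficient identity has a well-known bijective proof via Franklin's involution on partitions into distinct parts, which here — applied to partitions into distinct \emph{even} parts, the objects enumerated by $f_2$ — gives a pictorial explanation of why $M_e^*(n) - M_o^*(n)$ is supported only on doubled generalized pentagonal numbers. I do not foresee any substantive obstacle: the entire argument is the composition of a definitional unpacking, a previously established specialization of \eqref{crank gf}, and the pentagonal number theorem. The only minor care required is the bookkeeping to confirm that the two branches $n = m(3m-1)$ and $n = m(3m+1)$ account for all indices $m \in \mathbb{Z}$ with the correct sign.
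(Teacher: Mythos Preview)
Your proposal is correct and matches the paper's approach exactly: the paper offers no proof beyond the one-line remark that the corollary follows from the combinatorial version of Euler's pentagonal number theorem, and your three steps (the definitional unpacking of $d(n)$, the already-recorded identity $\sum_{n\ge 0} d(n)q^n=f_2$, and EPNT with $q\mapsto q^2$) spell out precisely that argument. There is nothing to add.
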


\section{Final remarks}
At last, we collect several questions here to motivate further investigation.
\begin{enumerate}[1)]
\item Yee \cite{Yee} provided a combinatorial proof of the generating function for $k$-colored F-partitions, which was independently established by Garvan \cite{Gar2}. To the best of our knowledge, there is no combinatorial proof for $k$-colored F-partition congruences up to now. In this paper, we provide a multirank that can explain the congruence modulo 5 combinatorially for $w_{4}(n)$. However, the combinatorial correspondence between $w_{4}(n)$ and $c\phi_{2}(n)$ is unclear. New ideas are required to emerge for solving these questions.

\item As shown in Sect.~\ref{sec:com inter}, we are able to combinatorially interpret mod $5$ and $7$ congruences for $w_t(n)$ by using a unified multirank or a vector crank. However, for mod $11$ congruence \eqref{mod 11 cong}, we cannot find such an interpretation along this line and hence we cry out for a combinatorial proof.

\item  It is well known that Euler's pentagonal number theorem has a beautiful combinatorial proof, which was found by Franklin \cite{Fran1882} in 1882. Hence it is also interesting to find a Franklin-type proof of the difference between $M_{e}^{*}(n)$ and $M_{o}^{*}(n)$ in \eqref{comb inter EPNT}.

\end{enumerate}

\section*{Acknowledgement}
The authors would like to thank George E. Andrews, Shishuo Fu, Michael D. Hirschhorn and Ae Ja Yee for their helpful comments and suggestions that have improved this paper to a great extent. The authors also acknowledge the helpful suggestions made by the referee. The second author was supported by the National Natural Science Foundation of China (No.~11501061).

\end{document}